\documentclass[12pt,a4paper]{scrartcl}

\usepackage[T1]{fontenc}
\usepackage{textcomp} 
\usepackage{calc}

\usepackage{setspace} 
\setstretch{1.05}

\usepackage[leqno]{amsmath}
\usepackage{amsfonts,amssymb,amsthm}

\usepackage[all]{xy}
\SelectTips{cm}{12}   

\usepackage{relsize}  

\newcommand{\Cone}{\mathrm{cone}}

\newcounter{lemma}
\theoremstyle{plain}                        
\newtheorem{localtheorem}[lemma]{Theorem}
\newtheorem*{localtheoremnn}{Comparison Theorem}
\newtheorem{locallemma}[lemma]{Lemma}
\newtheorem{localproposition}[lemma]{Proposition}

\newtheorem{localconstruction}[lemma]{Construction}

\theoremstyle{definition}

\newtheorem*{localremark}{Remark}
\newtheorem*{localdefinition}{Definition}

\newcommand{\Coh}{\mathrm{Coh}}
\newcommand{\Db}{\mathrm{D^b}}

\newcommand{\FM}{\mathsf{FM}}
\newcommand{\TT}{\mathsf{T}\hspace{-2pt}}

\newcommand{\sym}{{\mathrm{sym}}}

\DeclareMathOperator{\Pic}{Pic}

\DeclareMathOperator{\Quot}{Quot}
\DeclareMathOperator{\Hilb}{Hilb}

\newcommand{\Sym}{\mathrm{Sym}}

\DeclareMathOperator{\Ext}{Ext}
\DeclareMathOperator{\Hom}{Hom}

\DeclareMathOperator{\rk}{rk}

\DeclareMathOperator{\id}{id}

\newcommand{\dual}{^\vee}
\newcommand{\ddual}{^{\vee\vee}}
\newcommand{\inv}{^{-1}}
\newcommand{\orth}{^\perp}
\newcommand{\rarpa}[1]{\stackrel{#1}{\rightarrow}}

\newcommand{\isom}{ \text{{\hspace{0.48em}\raisebox{0.8ex}{${\scriptscriptstyle\sim}$}}}
                    \hspace{-0.65em}{\rightarrow}\hspace{0.3em}} 
\newcommand{\embed}{\hookrightarrow}

\renewcommand{\implies}{\Rightarrow}



\newcommand{\IN}{\mathbb N}

\newcommand{\IP}{\mathbb P}
\newcommand{\IQ}{\mathbb Q}

\newcommand{\IZ}{\mathbb Z}



\newcommand{\kg}{\mathcal{G}}

\newcommand{\ki}{\mathcal{I}}

\newcommand{\km}{\mathcal{M}}

\newcommand{\ko}{\mathcal{O}}
\newcommand{\kp}{\mathcal{P}}

\newcommand{\ks}{\mathcal{S}}
\newcommand{\kt}{\mathcal{T}}

\renewcommand{\tilde}[1]{\widetilde{#1}}

\newcommand{\coker}{\mathrm{coker}}

\newcommand{\im}{\mathrm{im}}
\newcommand{\supp}{\mathrm{supp}}

\newcommand{\map}[1]{\stackrel{#1}{\longrightarrow}}

\newcommand{\bib}[4]{\bibitem{#1} #2: \emph{#3}, #4.\vspace{-1ex}}

\newcounter{abccounter}
\newenvironment{abcliste}{\begin{list}{(\alph{abccounter})}
                      {\usecounter{abccounter}
                       \setlength{\topsep}{0ex}
                       \setlength{\partopsep}{0ex}
                       \setlength{\listparindent}{0ex}
                       \setlength{\itemsep}{0ex}
                       \setlength{\parsep}{0ex}
                       \setlength{\leftmargin}{3em}
                       \setlength{\labelwidth}{2em}
                       \setlength{\parskip}{0ex}
                      }
                      }{\end{list}}

\newcommand{\PH}{\IP} 

\begin{document}

\begin{center}
\textbf{\Large Postnikov-Stability versus Semistability of
Sheaves}

\bigskip

Georg Hein\footnote{
Universit\"at Duisburg-Essen, FB Mathematik D-45117 Essen,
\texttt{georg.hein@uni-due.de}}
, David Ploog\footnote{
Leibniz-Universit\"at Hannover, Welfengarten, D-30167 Hannover,
\texttt{ploog@math.uni-hannover.de}}
\\
\today
\end{center}

\begin{quote}{\small\scshape Abstract}
We present a novel notion of stable objects in a triangulated category.
This Postnikov-stability is preserved by equivalences.
We show that for the derived category of a projective
variety this notion includes the case of semistable sheaves.
As one application we compactify a moduli space of stable bundles
using genuine complexes via Fourier-Mukai transforms.\\
{\bf MSC 2000:} 14F05, 14J60, 14D20
\end{quote}

\subsection*{Introduction}

Let $X$ be a polarized, normal projective variety of dimension $n$ over an
algebraically closed field $k$. Our aim is to introduce a stability notion
for complexes, i.e.\ for objects of $\Db(X)$, the bounded derived category
of coherent sheaves on $X$. There are two main motivations for this notion:
on the one hand, Falting's observation that semistability on curves can be
phrased as the existence of non-trivial orthogonal sheaves \cite{Faltings}
and on the other hand, the recent proof of \'Alvarez-C\'onsul and King that
every Gieseker semistable sheaf possesses a non-trivial orthogonal object,
regardless of dimension \cite{ACK}. This result together with the
homological sheaf condition (Proposition
\ref{proposition-sheaf-conditions}) and the homological criterion for
purity (Proposition \ref{proposition-purity-conditions}) yields a purely
homological condition (Theorem \ref{P-implies-Gieseker}) for a complex to
be isomorphic to a Gieseker semistable sheaf of given Hilbert polynomial.

It seems only fair to point out that the results of this article in all
probability bear no connection with Bridgeland's notion of t-stability on
triangulated categories (see \cite{bridgeland}). His starting point about
(semi)stability in the classical setting is the Harder-Narashiman filtration
whereas, as mentioned above, we are interested in the possibility to capture
semistability in terms of Hom's in the derived category. Our approach is much
closer to, but completely independent of, Inaba (see \cite{inaba}).

\minisec{On notation:} We deviate slightly from common usage by writing $e^i$
for the $i$-th cohomology sheaf of an object $e\in\Db(X)$. Derivation of
functors is not denoted by a symbol: e.g.\ for a proper map $f:X\to Y$, we denote
by $f_*:\Db(X)\to\Db(Y)$ the exact functor obtained by deriving
$f_*:\Coh(X)\to\Coh(Y)$.

Given objects $a$, $b$ of a $k$-linear triangulated category, set
$\Hom^i(a,b):=\Hom(a,b[i])$ and $\hom^i(a,b):=\dim_k\Hom^i(a,b)$.
For $e\in\Db(X)$, we put $H^i(e):=\Hom^i(\ko_X,e)$ and $h^i(e):=\dim H^i(e)$.
The Hilbert polynomial of $e$ is denoted by $p_e$; it is defined by
$p_e(l)=\chi(e(l)):=\sum_i(-1)^i h^i(e\otimes\ko_X(l))$.
If $Z\subset X$ is a closed subset, then $e|_Z:=e\otimes\ko_Z$ denotes the
derived tensor product. For a line bundle $L$ on $X$, the notation $L^n$ will
mean the $n$-fold tensor product of $L$, except for the trivial bundle, where
$\ko_X^n$ denotes the free bundle of rank $n$.


\subsection*{P-stability}

Let $\kt$ be a $k$-linear triangulated category for some field $k$; we think
of $\kt=\Db(X)$, the bounded derived category of a normal projective variety
$X$, defined over an algebraically closed field $k$. A \emph{Postnikov-datum}
or just \emph{P-datum} is a finite collection
$C_d, C_{d-1},\dots, C_{e+1}, C_e\in\kt$ of objects together with nonnegative
integers $N_j^i$ (for $i,j\in\IZ$) of which only a finite number are nonzero.
We will write $(C_\bullet,N)$ for this.

Recall the notions of Postnikov system and convolution (see \cite{GM},
\cite{BBD}, \cite{Orlov}, \cite{Kawamata}): given finitely many objects $A_i$
(suppose $n\geq i\geq 0$) of $\kt$ together with morphisms
$d_i:A_{i+1}\to A_{i}$ such that $d^2=0$, a diagram of the form
\[ \xymatrix@=1em{
  A_n    \ar[rr]^{d_{n-1}} \ar@{=}[ddr] & &
  A_{n-1} \ar[rr]^{d_{n-2}} \ar[ddr] & &
  A_{n-2} \ar[r]           \ar[ddr] &
  \cdots\cdots & & \cdots\cdots\ar[r] &
  A_1 \ar[rr]^{d_{0}}      \ar[ddr] & &
  A_0                     \ar[ddr] \\ \\
&  T_n    \ar[uur] & &
  T_{n-1} \ar[uur] \ar[ll]^{[1]} & &
  T_{n-2}               \ar[ll]^{[1]} &
  \cdots\cdots \ar[l] &
  T_2    \ar[uur]      \ar[l] & &
  T_{1}   \ar[uur]      \ar[ll]^{[1]} & &
  T_{0}                 \ar[ll]^{[1]}
} \]
(where the upper triangles are commutative and the lower ones are distinguished)
is called a \emph{Postnikov system} subordinated to the $A_i$ and $d_i$. The
object $T_0$ is called the \emph{convolution} of the Postnikov system.

\begin{localdefinition}
An object $A\in\kt$ is \emph{P-stable with respect to $(C_\bullet,N)$} if
\begin{abcliste}
\item[(i)]
      $\hom^i_{\kt}(C_j,A)=N^i_j$ for all $j=d,\dots,e$ and all $i$.
\item[(ii)]
      For $j>0$, there are morphisms $d_j:C_j\to C_{j-1}$ such that $d^2=0$ and
      that the complex $(C_{\bullet\ge0},d_\bullet)$ admits a convolution $K$.
\item[(iii)]
      $K\in A\orth$, i.e.\ $\Hom_{\kt}^*(A,K)=0$.
\end{abcliste}
\end{localdefinition}

\begin{localremark}
$\text{\phantom{xxx}}$
\begin{abcliste}
\item Convolutions in general do not exist, and if they do, there is no
      uniqueness in general, either. There are restrictions on the
      $\Hom^i(C_a,C_b)$'s which ensure the existence of a (unique) convolution.
      For example, if $\kt=\Db(X)$ and all $C_j$ are sheaves, then the unique
      convolution is just the complex $C_\bullet$ considered as an object of $\Db(X)$.
\item Note that the objects $C_j$ with $j<0$ do not take part in forming the
      Postnikov system. We call the conditions enforced by these objects
      via (i) the \emph{passive} stability conditions. They can be used to
      ensure numerical constraints, like fixing the Hilbert polynomial of
      sheaves. In some cases, it is useful to specify only some of the $N^i_j$.
      We will do this a few times --- the whole theory runs completely parallel,
      with a slightly more cumbersome notation.
\item In many situations there will be trivial choices that ensure P-stability.
      This should be considered as a defect of the parameters (like choosing
      non-ample line bundles when defining $\mu$-stability) and not as a defect
      of the definition.
\end{abcliste}
\end{localremark}

By the very definition of $P$-stability, the following statement about
preservation of stability under fully faithful functors (e.g.\ equivalences)
is immediate.

\begin{localtheorem} \label{theorem-preservation}
Let $\Phi:\kt \to \ks$ be an exact, fully faithful functor between $k$-linear
triangulated categories $\kt$ and $\ks$, and $(C_\bullet,N)$ a $P$-datum
in $\kt$. Then, an object $A \in \kt$ is P-stable with respect to $(C_\bullet,N)$
if and only if $\Phi(A)$ is P-stable with respect to $(\Phi(C_\bullet),N)$.
\end{localtheorem}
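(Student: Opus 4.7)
The plan is to verify each of the three defining conditions of P-stability, using the basic fact that full faithfulness of $\Phi$ yields a bijection $\Hom^i_\kt(X,Y) \cong \Hom^i_\ks(\Phi X,\Phi Y)$ for all $X,Y\in\kt$ and $i\in\IZ$. Condition (i) is immediate with $X = C_j$ and $Y = A$, since the dimensions $\hom^i$ agree on both sides. Condition (iii) will follow similarly from $\Hom^*_\kt(A,K) \cong \Hom^*_\ks(\Phi A,\Phi K)$, provided we can match up the convolutions. So the real content lies in condition (ii).

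For the forward direction of (ii), I would apply the exact functor $\Phi$ termwise to a Postnikov system witnessing (ii) for $A$ in $\kt$. Exactness preserves the distinguished lower triangles in the Postnikov diagram, and functoriality preserves the commutative upper triangles, so $(\Phi(T_i),\Phi(d_j))$ is a Postnikov system in $\ks$ subordinated to $(\Phi(C_\bullet),\Phi(d_\bullet))$ with convolution $\Phi(K)$. Condition (iii) for $\Phi(A)$ then follows immediately from $\Hom^*_\ks(\Phi A,\Phi K) = \Hom^*_\kt(A,K) = 0$.

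The backward direction is where I expect the main obstacle. Given morphisms $\tilde d_j$ in $\ks$ with $\tilde d^2 = 0$ and convolution $\tilde K$, full faithfulness lifts each $\tilde d_j$ uniquely to $d_j : C_j \to C_{j-1}$ in $\kt$, and faithfulness gives $d^2 = 0$. Constructing the Postnikov tower in $\kt$ is more delicate because $\Phi$ need not be essentially surjective, so one cannot simply pull back the $\tilde T_i$ from $\ks$. Instead I would build the tower inductively, setting $T_n^\kt := C_n$ and at each step forming $T_i^\kt$ as a cone (which exists in any triangulated category) of a morphism $T_{i+1}^\kt \to C_i$ extending the previously-constructed structure. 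The existence of each such extending morphism reduces to the vanishing of an obstruction in a certain Hom group in $\kt$; full faithfulness identifies this Hom group with its counterpart in $\ks$, where the obstruction vanishes by the hypothesis that $\tilde K$ exists. Once a convolution $K$ in $\kt$ is obtained this way, condition (iii) for $A$ transfers back via full faithfulness exactly as in the forward direction.
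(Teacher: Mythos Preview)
The paper gives no proof at all: it simply declares the statement ``immediate'' from the definition of P-stability. Your argument is therefore already more careful than the source, and it is essentially correct.

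One remark on the backward direction. Your phrasing in terms of ``the vanishing of an obstruction in a certain Hom group'' is workable but slightly oblique, and it leaves open whether the particular convolution you build in $\kt$ matches $\tilde K$ well enough to transfer condition~(iii). A cleaner route is to carry along, at each inductive step, an isomorphism $\Phi(T_{i+1}^\kt)\cong\tilde T_{i+1}$ in $\ks$. Then the given map $\tilde T_{i+1}\to\Phi(C_i)$ transports directly (via this isomorphism and full faithfulness) to a map $T_{i+1}^\kt\to C_i$ in $\kt$; the upper triangle commutes because it does after applying $\Phi$, and the cone $T_i^\kt$ taken in $\kt$ satisfies $\Phi(T_i^\kt)\cong\tilde T_i$ by exactness and uniqueness of cones up to isomorphism. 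At the end you get $\Phi(K)\cong\tilde K$, so $\Hom^*_\kt(A,K)\cong\Hom^*_\ks(\Phi A,\tilde K)=0$ and (iii) follows. Equivalently: the essential image of a fully faithful exact functor is a full triangulated subcategory, so any Postnikov system whose terms are the $\Phi(C_j)$ automatically lives inside it and can be pulled back wholesale. This avoids having to argue separately that some obstruction element vanishes and then worry about which lift to choose.
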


This shifts the viewpoint from preservation of stability to transformation
of stability parameters under Fourier-Mukai transforms. See Proposition
\ref{spanish} for an example. The main result of this article is the following
theorem: P-stability contains both Gieseker stability and $\mu$-stability.

\begin{localtheoremnn} Let $X$ be a smooth projective variety and $H$ a very ample
divisor on $X$. Fix a Hilbert polynomial $p$. Then there is a P-stability datum
$(C_\bullet, N)$ such that for any object $E\in\Db(X)$
the following conditions are equivalent:
\begin{abcliste}
\item[(i)]  $E$ is a $\mu$-semistable sheaf with respect to $H$
             of Hilbert polynomial $p$
\item[(ii)] $E$ is P-stable with respect to $(C_\bullet, N)$.
\end{abcliste}

\noindent
Likewise, there is a P-stability datum $(C'_\bullet, N')$ such that for any
object $E\in\Db(X)$ the following conditions are equivalent:
\begin{abcliste}
\item[(i')]  $E$ is a Gieseker semistable pure sheaf with respect to $H$
             of Hilbert polynomial $p$
\item[(ii')] $E$ is P-stable with respect to $(C'_\bullet, N')$.
\end{abcliste}
\end{localtheoremnn}

The proof of this theorem occupies the next section. The actual statements are
slightly sharper; see Theorems \ref{P-implies-mu} and \ref{P-implies-Gieseker}.
The case of surfaces was already treated in \cite{HP2}.


\section{Proof of the Comparison Theorem}

The proof proceeds in the following steps: \\
1. Euler triangle and generically injective morphisms. \\
2. Homological conditions for a complex to be a sheaf. \\
3. Homological conditions for purity of a sheaf. \\
4. Homological conditions for semistability on curves. \\
5. P-stability implies $\mu$-semistability. \\
6. P-stability implies Gieseker semistability.

\subsection{The Euler triangle}

\begin{locallemma}\label{lemma-generic-injective}
Let $U$ and $W$ be $k$-vector spaces of finite dimension. Consider a morphism
 $\rho:U\otimes\ko_{\IP^n}\to W\otimes\ko_{\IP^n}(1)$
with nonzero kernel $K=\ker(\rho)$. Then for any integer $m\geq(\dim(U)-1)n$ we have $H^0(K(m))\neq0$.
\end{locallemma}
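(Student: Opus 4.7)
The plan is to produce an explicit nonzero element of $H^0(K(r))$, where $r$ denotes the generic rank of $\rho$, via a Cramer-style cofactor expansion on an appropriate submatrix of $\rho$, and then ascend to all $m\ge(u-1)n$ by multiplication with a linear form.

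I would identify $\rho$ with a $w\times u$ matrix $(\rho_{ji})$ of linear forms on $\IP^n$, with $u=\dim U$ and $w=\dim W$. Let $r$ be the generic rank of $\rho$. Since $K\ne 0$, we have $r\le u-1$. The case $r=0$ is trivial ($\rho=0$, $K=\ko_{\IP^n}^u$, so $H^0(K)\ne 0$), so assume $r\ge 1$. After permuting rows and columns I may take the first $r$ rows of $\rho$ to span the row space at the generic point of $\IP^n$, and then take the first $r$ columns so that the $r\times r$ submatrix $B$ formed by the first $r$ rows and columns has $\det B\ne 0$ as a polynomial.

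Now consider the $r\times(r+1)$ submatrix $B'$ consisting of the first $r$ rows and first $r+1$ columns of $\rho$, and set $v_i=(-1)^i\det(B'_{(i)})$ for $i=1,\ldots,r+1$, where $B'_{(i)}$ denotes $B'$ with its $i$-th column deleted. The standard cofactor identity gives $B'\cdot v=0$, and each $v_i$ is a homogeneous polynomial of degree $r$. Extending by zeros produces $\tilde v\in U\otimes H^0(\ko_{\IP^n}(r))$. By construction the first $r$ rows of $\rho$ annihilate $\tilde v$; since they also span the row space of $\rho$ over $k(\IP^n)$, every further row $\rho_j$ is a rational-linear combination of them, so $\rho_j\cdot\tilde v$ vanishes at the generic point, and as it is a polynomial it then vanishes identically. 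Moreover $v_{r+1}=\pm\det B\ne 0$, so $\tilde v\ne 0$; hence $\tilde v\in H^0(K(r))\setminus\{0\}$. Finally, because $K\subset\ko_{\IP^n}^u$ is torsion-free, any nonzero linear form $h$ is a non-zero-divisor on $K$, so multiplication by $h$ induces an injection $H^0(K(m))\hookrightarrow H^0(K(m+1))$ for every $m$. Iterating gives $H^0(K(m))\ne 0$ for all $m\ge r$, and in particular for $m\ge(u-1)n$ since $r\le u-1\le(u-1)n$.

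The step requiring the most care is justifying that $\tilde v$, which a priori only lies in the kernel of the chosen $r\times(r+1)$ submatrix, actually lies in the kernel of the full $w\times u$ matrix $\rho$. This is exactly where the choice of the first $r$ rows as a basis for the row space of $\rho$ at the generic point enters: every remaining row is a $k(\IP^n)$-linear combination of those, so its pairing with $\tilde v$ vanishes generically and hence identically as a polynomial. Incidentally, this Cramer approach in fact yields the sharper bound $m\ge u-1$; the lemma's $(u-1)n$ is looser and easily implied.
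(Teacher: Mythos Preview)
Your proof is correct and proceeds along an entirely different line from the paper's. The paper argues by a dimension count: setting $I=\im(\rho)$, it bounds $h^0(I(m))\le(\dim U-1)\binom{m+n+1}{n}$ (splitting into the cases $\dim W<\dim U$ and $\dim W\ge\dim U$, in the latter first passing to a subspace $W'\subset W$ of dimension $\dim W-\dim U+1$ that injects into $\coker\rho$), and then compares with $h^0(U\otimes\ko_{\IP^n}(m))=\dim U\cdot\binom{m+n}{n}$; the threshold $(\dim U-1)n$ is exactly where the second quantity overtakes the first. Your approach is instead constructive: from a nonvanishing $r\times r$ minor you write down an explicit nonzero element of $H^0(K(r))$ via a Cramer cofactor vector, then propagate upward using torsion-freeness of $K$. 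This is more elementary (no case split, no binomial inequalities) and in fact yields the sharper bound $m\ge r\le\dim U-1$, which would in turn improve the numerical threshold in the subsequent lemma on the objects $S^m(V,a,b)$. One tiny caveat: your closing inequality $u-1\le(u-1)n$ tacitly assumes $n\ge1$; the case $n=0$ is of course trivial since twisting on a point does nothing.
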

\proof
Denoting $I:=\im(\rho)$ and $C:=\coker(\rho)$, there are two short exact sequences
$0\to K\to U\otimes\ko_{\IP^n}\to I\to0$ and
$0\to I\to W\otimes\ko_{\IP^n}(1)\to C\to 0$.
Their long cohomology sequences yield
 $h^0(K(k)) \geq h^0(U\otimes\ko_{\IP^n}(k))-h^0(I(k))$ and
 $h^0(I(k)) \leq h^0(W\otimes\ko_{\IP^n}(1))$.

First assume $\dim(W) < \dim(U)$. This implies
$h^0(I(m))\leq(\dim(U)-1)\binom{n+1+m}{n}$.
Since $h^0(U\otimes\ko_{\IP^n}(m))=\dim(U)\binom{n+m}{n}$,
this yields
$h^0(U\otimes\ko_{\IP^n}(m))>h^0(I(m))$ for all $m \geq
(\dim(U)-1)n$. Thus, we obtain $h^0(K(m))>0$ for $m \geq (\dim(U)-1)n$.

Now assume $\dim(W) \geq \dim(U)$. Then $C$ has rank at least
$\dim(W)-\dim(U)+1$. Hence there exists a subspace $W'\subset W$ of
dimension $\dim(W)-\dim(U)+1$ such that the resulting morphism
$W'\otimes\ko_{\IP^n}(1)\to C$ is injective in the generic point, and
hence injective. Thus, the image of the injective morphism
$H^0(I(k)) \to H^0(W\otimes\ko_{\IP^n}(k+1))$ is transversal to
$H^0(W'\otimes\ko_{\IP^n}(k+1))$. This implies
$h^0(I(m))\leq(\dim(U)-1)\binom{n+1+m}{n}$ and we proceed as before.
\qed

\begin{localconstruction}\label{Sm}
The Euler triangle and objects $S^m(V,a,b)$.
\end{localconstruction}
For any two objects $a,b$ of a $k$-linear triangulated category $\kt$ and some
subspace $V\subset\Hom(a,b)$ of finite dimension we define a distinguished (Euler)
triangle
\[ S^m(V,a,b) \to \Sym^{m+1}(V)\otimes a \map{\theta} \Sym^m(V)\otimes b
              \to S^m(V,a,b)[1] \]
where tensor products of vector spaces and objects are just finite direct sums,
and $\theta$ is induced by the natural map
 $\Sym^{m+1}(V)\to\Sym^m(V)\otimes\Hom(a,b)$,
 $f_0\vee\dots\vee f_m \mapsto
  \sum_i (f_0\vee\cdots\hat{f_i}\cdots\vee f_m)\otimes f_i$.
If $\Hom(a,b)$ is finite dimensional, we use the short hand
$S^m(a,b):=S^m(\Hom(a,b),a,b)$.
For any $c\in\kt$, there is a long exact sequence
\[\xymatrix@C=1.4em
{  \Hom^{k-1}(b,c)\otimes\Sym^m(V\dual)\ar[r]
 & \Hom^{k-1}(a,c)\otimes\Sym^{m+1}(V\dual) \ar[r]
 & \Hom^{k-1}(S^m(V,a,b),c) \ar[dll]
\\
   \Hom^{k}(b,c)\otimes\Sym^m(V\dual)\ar[r]
 & \Hom^{k}(a,c)\otimes\Sym^{m+1}(V\dual) \ar[r]
 & \Hom^{k}(S^m(V,a,b),c).
} \]

\begin{localremark}
In the special case where $\kt=\Db(\IP^n_k)$ is the bounded derived
category of the projective space $\IP^n_k$ and $a=\ko_{\IP^n}$,
$b=\ko_{\IP^n}(1)$, $V=\Hom(a,b)=H^0(\ko_{\IP^n}(1))$ and $m=0$, the above
triangle comes from  the Euler sequence
 $0\to\Omega_{\IP^n}(1)\to\ko_{\IP^n}^{n+1} \to\ko_{\IP^n}(1)\to 0.$
\end{localremark}

\begin{locallemma}\label{lemma-injective-sm}
Let $\kt$ be a triangulated $k$-linear category with finite-dimensional Hom's,
$a,b,c\in\kt$ objects with $\Hom^{-1}(a,c)=0$ and let $V\subset\Hom(a,b)$ be
a subspace. Then the following conditions are equivalent:
\begin{abcliste}
\item[(i)]  The natural morphism $\varrho_v:\Hom(b,c)\to\Hom(a,c)$ is injective for
            general $v \in V$.
\item[(ii)] $\Hom^{-1}(S^m(V,a,b),c)=0$ holds for some
            $m\geq(\dim(V)-1)(\hom(b,c)-1)$.
\end{abcliste}
\end{locallemma}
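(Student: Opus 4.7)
The plan is to apply $\Hom(-,c)$ to the Euler triangle defining $S^m(V,a,b)$ and translate the resulting kernel into a sheaf-theoretic problem on $\IP(V)$ to which Lemma \ref{lemma-generic-injective} applies directly. For the first step, I would plug $k=0$ into the long exact sequence from Construction \ref{Sm}. The leftmost relevant term $\Sym^{m+1}(V\dual) \otimes \Hom^{-1}(a,c)$ vanishes by hypothesis, yielding
\[ 0 \to \Hom^{-1}(S^m(V,a,b),c) \to \Sym^m(V\dual) \otimes \Hom(b,c) \stackrel{\theta^*}{\to} \Sym^{m+1}(V\dual) \otimes \Hom(a,c), \]
so $\Hom^{-1}(S^m(V,a,b),c) = \ker(\theta^*)$.

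For the second step, I would reinterpret $\theta^*$ geometrically on $\IP := \IP(V) \cong \IP^{\dim V - 1}$. Using the identification $\Sym^m V\dual = H^0(\IP, \ko_\IP(m))$, the composition pairing $V \otimes \Hom(b,c) \to \Hom(a,c)$, $(v,g) \mapsto g \circ v$, determines a sheaf morphism
\[ \rho : \Hom(b,c) \otimes \ko_\IP \to \Hom(a,c) \otimes \ko_\IP(1) \]
whose restriction to fibers at $[v] \in \IP$ is $\varrho_v$. A direct check comparing the derivation-style formula for $\theta$ in Construction \ref{Sm} with the multiplication $V\dual \otimes \Sym^m V\dual \to \Sym^{m+1} V\dual$ identifies $\theta^*$ with $H^0(\rho(m))$. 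By left-exactness, $\ker(\theta^*) = H^0((\ker\rho)(m))$, and since $\ker\rho$ is a torsion-free subsheaf of the locally free sheaf $\Hom(b,c) \otimes \ko_\IP$, condition (i) is equivalent to $\ker\rho = 0$.

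To conclude, I would invoke Lemma \ref{lemma-generic-injective}, taking its $n$, $U$, $W$ to be $\dim V - 1$, $\Hom(b,c)$, and $\Hom(a,c)$, respectively. If (i) holds, then $\ker\rho = 0$ and $H^0((\ker\rho)(m))$ vanishes for every $m$, so (ii) holds. Conversely, if (ii) holds but $\ker\rho \neq 0$, the lemma forces $H^0((\ker\rho)(m)) \neq 0$ for every $m \geq (\hom(b,c) - 1)(\dim V - 1)$, a contradiction. I expect the bookkeeping that identifies $\theta^*$ with $H^0(\rho(m))$ to be the only non-routine step; once it is in place, the result follows by direct application of Lemma \ref{lemma-generic-injective}.
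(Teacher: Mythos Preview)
Your proof is correct and follows essentially the same route as the paper: both set up a sheaf map $\rho$ on the projective space of $V$, identify $\Hom^{-1}(S^m(V,a,b),c)$ with the kernel of $H^0(\rho(m))$ via the long exact sequence of Construction~\ref{Sm} (using $\Hom^{-1}(a,c)=0$), and then invoke Lemma~\ref{lemma-generic-injective}. The only cosmetic difference is notational---you write $\IP(V)$ in the classical convention (lines in $V$), whereas the paper writes $\IP(V\dual)$ in the Grothendieck convention; these are the same space, and your identification $\Sym^m V\dual = H^0(\ko_\IP(m))$ is consistent with your choice.
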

\proof
We consider the morphism $\Hom(b,c)\to V\dual\otimes\Hom(a,c)$.
Together with the natural surjection
 $V\dual\otimes\ko_{\IP(V\dual)}\to\ko_{\IP(V\dual)}(1)$,
this gives a morphism
\[ \varrho: \Hom(b,c) \otimes \ko_{\IP(V\dual)} \to
            \Hom(a,c) \otimes \ko_{\IP(V\dual)}(1) \quad
   \mbox{ on} \quad \IP(V\dual) \,.\]
The injectivity of $\varrho$ is equivalent to the injectivity at all
stalks, i.e.\ of $\varrho_v:\Hom(b,c)\to\Hom(a,c)$ for all $v\in V$;
since $\ker(\varrho)$ is a subsheaf of a torsion free sheaf, the
injectivity for just one $v \in V$ is enough.
By Lemma \ref{lemma-generic-injective} this is equivalent to the injectivity of
\[ H^0( \varrho \otimes \ko_{\IP(V\dual)}(m)) :
   H^0(\Hom(b,c) \otimes \ko_{\IP(V\dual)}(m)) \to
   H^0(\Hom(a,c) \otimes \ko_{\IP(V\dual)}(m+1)) \]
for $m = (\dim(V)-1)(\hom(b,c)-1)$.
Since $\Hom^{-1}(a,c)=0$, the long exact cohomology sequence of the
triangle from Construction \ref{Sm} gives that the kernel of
$H^0(\varrho\otimes\ko_{\IP(V\dual)}(m))$ is $\Hom^{-1}(S^m(V,a,b),c)$.
\qed


\subsection{Sheaf conditions} \label{Sheaf-Conditions}

Let $X$ be a projective variety over $k$ (in this subsection, we only need
$k$ to be infinite) and $\ko_X(1)$ a line bundle corresponding to the very
ample divisor $H$. Let $V=H^0(\ko_X(1))$ the space  of global sections and
$\PH:=\IP(V\dual)=|H|$ the complete linear system for $H$.

Our aim is to find conditions on a complex $e\in\Db(X)$ in terms of the Hom's
from finitely many test objects, ensuring that $e$ is isomorphic to a sheaf,
i.e.\ a complex concentrated in degree 0. These conditions only depend on the
Hilbert polynomial $p_e$ with respect to $\ko_X(1)$.

\minisec{The numerical data}
Fix non-negative integers $n$ and $v$. For a polynomial function $p\in\IQ[t]$
with integer values, its derivative is defined as $p'(t):=p(t)-p(t-1)$. We also
set $\sym_v(m):=\binom{m+v-1}{v-1}$, which is the dimension of $\Sym^m(V)$ for
a $v$-dimensional vector space $V$.

Call a sequence $(m_1,\dots,m_n)$ of integers \emph{$(p,n)$-admissible} if
$m_{k+1}\geq(p_k(-l)-1)(v-1)$ for $l=1,\dots,n-k$ where the  polynomials $p_0$,
\dots,$p_{n-1}$ are defined by $p_0=p$ and
 $p_{k+1}=\sym_v(m_{k+1})\cdot p_k' + \sym_{v-1}(m_{k+1})\cdot p_k$.
One can easily define a $(p,n)$-admissible sequence by recursion: set
$m_{k+1}:=\max\{(p_k(-l)-1)(v-1) \:|\: l=1,\dots,n-k\}$, the polynomials
being defined by the above formula in each step.

Suppose that $(m_1,\dots,m_n)$ is a $(p,n$)-admissible sequence and that
$p_k(-l)\geq0$ for all $l,k\geq0$ with $l+k\leq n$. Then $(m_1,\dots,m_{n-1})$
is a $(p',n-1)$-admissible sequence, as follows from induction and unwinding
the definitions. In this case, if the auxiliary polynomials for the
$(p,n)$-sequence are denoted $p_0,\dots,p_n$ as above, then those for the
$(p',n-1)$-sequence are just $p_0',\dots,p_{n-1}'$.

\minisec{The vector bundles $G_m$ and $S_m$ and $F_k$}
We denote the standard  projections by $p:\PH\times X\to\PH$ and
$q:\PH\times X\to X$. The identity in
 $V\otimes V\dual = H^0(\ko_X(H))\otimes H^0(\ko_{\PH}(1))
                  = \Hom(q^*\ko_X(-H),p^*\ko_{\PH}(1))$
yields a natural morphism
 $\alpha:q^*\ko_X(-H)\to p^*\ko_{\PH}(1)$.
The cokernel $\kg$ of $\alpha$ is the universal divisor, i.e.\
$\kg|_{\{D\}\times X}=\ko_D$ for all $D\in\PH$.
We can consider $q^*\ko_X(-H)$ and $p^*\ko_\PH(1)$ and $\kg$ as Fourier-Mukai
kernels on $\PH\times X$. Then we obtain, for any object $a\in\Db(\PH)$, an
exact triangle $\FM_{q^*\ko_X(-H)}(a)\to\FM_{p^*\ko_\PH(1)}(a)\to\FM_\kg(a)$. In
particular, we set $G_m:=\FM_\kg(\ko_\PH(m))$. The projection formula and base
change show that the above triangle reduces to the short exact sequence
 $0\to\Sym^m(V)\otimes\ko_X(-H)\to\Sym^{m+1}(V)\otimes\ko_X\to G_m\to0$
if $m>-n$. Hence in this case, $G_m=R^0q_*(\kg\otimes p^*\ko_{\PH}(m))$ is a
vector bundle and the higher direct images vanish. The exact sequence also
yields $p_{G_m\otimes e}=\sym_v(m)\cdot p_e' + \sym_{v-1}(m)\cdot p_e$.

Let $S_m:=G_m\dual$; note that $S_m=S^m(V,\ko_X,\ko_X(1))$ using Construction
\ref{Sm}. By  Lemma \ref{lemma-injective-sm}, for $e\in\Db(X)$ with
$H^{-1}(e)=0$ and $m\geq(v-1)(h^0(e(-1))-1)$, the following conditions are equivalent:

\noindent
\begin{tabular}{ll}
(i)   & $H^{-1}(e|_D)=0$ for general $D\in|H|$ with $e|_D:=e\otimes\ko_D$
        (derived tensor product)\\
(ii)  & $\Hom^{-1}(S_m,e)=0$ \\
(iii) & $H^{-1}(e\otimes G_m)=0$.
\end{tabular}

\noindent
Finally, we define another series of vector bundles by $F_0:=\ko_X$ and
$F_k:=F_{k-1}\otimes G_{m_k}$.

\begin{localproposition} \label{proposition-sheaf-conditions}
Let $X$ be a projective variety of dimension $n$ and $\ko_X(1)$ a
very ample line bundle. Let $V=H^0(\ko_X(1))$ and $v=\dim(V)$. Let
$p\in\IQ[t]$ be an integer valued polynomial with $\deg(p)\leq n$.
Suppose that $(m_1,\dots,m_n)$ is a $(p,n)$-admissible sequence with auxiliary
polynomials $p_1$,\dots,$p_n$.

Assume that $e\in\Db(X)$ is an object such that for all $l,k\geq0$ with
$l+k\leq n$ we have $h^0(F_k(-l)\otimes e)=p_k(-l)$ and
$h^i(F_k(-l)\otimes e)=0$ for all $i\neq0$. Then $e\cong e^0$ is a sheaf with
Hilbert polynomial $p$. Furthermore, $e$ is 0-regular.
\end{localproposition}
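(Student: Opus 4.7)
The plan is to induct on $n = \dim X$. The base case $n = 0$ is clean: $X$ is a disjoint union of reduced points, hence affine, and the hypothesis reduces to $H^i(e) = 0$ for $i \neq 0$; since each $H^i(e)$ decomposes as a direct sum of stalks, every cohomology sheaf $e^i$ with $i \neq 0$ vanishes.

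For the inductive step $n \geq 1$, I plan to pick a generic section $s \in V$ with zero divisor $D \in |H|$ (integral by Bertini) and verify that $e|_D \in \Db(D)$ satisfies the analogous hypothesis on $D$, with Hilbert polynomial $p'$ and truncated sequence $(m_1, \ldots, m_{n-1})$. Two observations let this reduction go through: first, the truncated sequence is $(p', n-1)$-admissible by the remark preceding the statement, since the positivity $p_k(-l) = h^0(F_k(-l) \otimes e) \geq 0$ is automatic; second, $F_k|_D$ coincides with the construction of $F_k$ on $D$ from the pulled-back subspace $V \subseteq H^0(\ko_D(1))$, because the defining short exact sequence for $G_m$ stays exact under restriction. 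The long exact sequence obtained from $0 \to \ko_X(-H) \to \ko_X \to \ko_D \to 0$ tensored with $F_k(-l) \otimes e$ immediately supplies $h^i(F_k|_D(-l) \otimes e|_D) = 0$ for $i \notin \{-1, 0\}$ and identifies the remaining two groups as the kernel and cokernel of the multiplication-by-$s$ map $\mu_s \colon H^0(F_k(-l-1) \otimes e) \to H^0(F_k(-l) \otimes e)$. The crux is the injectivity of $\mu_s$ for generic $s$, which follows from Lemma \ref{lemma-injective-sm} with $a = \ko_X$, $b = \ko_X(1)$, $c = F_k(-l) \otimes e$: the hypotheses $\Hom^{-1}(a, c) = H^{-1}(F_k(-l) \otimes e) = 0$ and $\Hom^{-1}(S_{m_{k+1}}, c) = H^{-1}(F_{k+1}(-l) \otimes e) = 0$ come straight from the main assumption, while the $(p, n)$-admissibility furnishes exactly the required bound $m_{k+1} \geq (\dim V - 1)(\hom(b, c) - 1) = (v - 1)(p_k(-l-1) - 1)$.

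By the inductive hypothesis, $e|_D$ is a sheaf on $D$ of Hilbert polynomial $p'$ and is $0$-regular. The Hilbert polynomial assertion for $e$ (the polynomials $p_e$ and $p$ agree at the $n+1$ values $0, -1, \ldots, -n$) and its $0$-regularity (take $l = i$ in $h^i(e(-l)) = 0$) follow at once once $e$ is shown to be a sheaf. The principal obstacle is precisely this last step. The Tor spectral sequence for $e \otimes^L \ko_D$ forces each $e^i$ with $i \neq 0$ to have $0$-dimensional support (by choosing $D$ generic enough to avoid the union of those supports). To exclude nonzero such $e^i$, I plan to analyze the hypercohomology spectral sequence $E_2^{p, q} = H^p(e^q(-l)) \Rightarrow H^{p+q}(e(-l))$: the columns $q \neq 0$ are concentrated at $p = 0$ with value independent of $l$, so the only potentially non-trivial differential out of $E_r^{0, q}$ is $d_{q+1} \colon E_{q+1}^{0, q} \to E_{q+1}^{q+1, 0}$. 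The full strength of the vanishing $H^i(e(-l)) = 0$ for $i \neq 0$ and $l \in [0, n]$, together with the polynomial dependence of $h^i(e^0(-l))$ on $l$, should force $\length(e^q) = 0$ for every $q \neq 0$, completing the argument.
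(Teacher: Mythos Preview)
Your overall strategy matches the paper's: induct on $\dim X$, restrict to a general hyperplane $D$, verify the hypotheses on $D$ via Lemma~\ref{lemma-injective-sm}, conclude by induction that $e|_D$ is a sheaf, deduce that each $e^i$ with $i\ne 0$ has zero-dimensional support, and finish with a hypercohomology spectral sequence. The reduction step is essentially correct; one small slip is that $V$ does \emph{not} embed in $H^0(\ko_D(1))$ --- the section $s$ cutting out $D$ spans the kernel of the restriction map --- so the sentence ``$F_k|_D$ coincides with the construction of $F_k$ on $D$ from the pulled-back subspace $V\subseteq H^0(\ko_D(1))$'' needs rephrasing. The paper glosses over the same point, and it is harmless.

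The genuine gap is in your last paragraph. In your spectral sequence $E_2^{p,q}=H^p(e^q(-l))\Rightarrow H^{p+q}(e(-l))$ with $d_r\colon E_r^{p,q}\to E_r^{p+r,q-r+1}$, the entries $(0,q)$ with $q<0$ emit and receive no differentials, so $H^q(e(-l))=0$ kills $e^q$ for $q<0$ immediately. For $q\ge 1$, however, convergence only tells you that $d_{q+1}\colon H^0(e^q)\to H^{q+1}(e^0(-l))$ is an \emph{isomorphism}, i.e.\ $\length(e^q)=h^{q+1}(e^0(-l))$. Your appeal to ``polynomial dependence of $h^i(e^0(-l))$ on $l$'' is unfounded: these numbers are not polynomial over the finite range $0\le l\le n$, and the constancy you obtain from the spectral sequence does not by itself force the common value to be zero. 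What is actually needed is the vanishing $H^j(e^0(-l))=0$ for $j\ge 2$ and $0\le l\le n$, and this requires an extra step. For generic $D$ avoiding the zero-dimensional supports one has $e|_D=e^0|_D$ (so multiplication by $s$ is injective on $e^0$) and, from your own reduction, $H^i(e^0|_D(-l))=0$ for $i\ne 0$ and $0\le l\le n-1$; the long exact sequence of $0\to e^0(-l-1)\to e^0(-l)\to e^0|_D(-l)\to 0$ then gives $H^j(e^0(-l-1))\cong H^j(e^0(-l))$ for $j\ge 2$, and iterating up to large positive twists yields the required vanishing. The paper carries out precisely this LES argument (and then adds a Mumford-style diagram chase to kill $H^1(e^0)$, which in your indexing already drops out of the $p+q=1$ convergence and is therefore not strictly needed). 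Once you insert this LES step in place of the ``polynomial dependence'' heuristic, your plan goes through.
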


\begin{proof}
We proceed by induction on the dimension $n$. The start $n=0$ is trivial.

Let $n>0$. In a first step, we use induction to show that the complex $e|_D$ is
a sheaf for general $D\in|H|$. Let us begin by pointing out that
$(m_1,\dots,m_{n-1})$ is a $(p',n-1)$-admissible sequence. Next, the graded
vector spaces $H^*(F_k(-l)\otimes e)$ vanish by assumption outside of
degree 0, where $k,l\geq0$ and $k+l\leq n$. Hence, $H^*(F_k(-l)\otimes e|_D)$
can be nontrivial at most in degrees 0 and $-1$ (where $k+l<n$). But
 $H^{-1}(F_k(-l)\otimes e)=0$ and
 $H^{-1}(G_{m_{k+1}}\otimes F_k(-l)\otimes e)=0$ and
 $m_{k+1} \geq (v-1)(p_k(-l-1)-1)$
together with $p_k(-l-1)=h^0(F_{k}(-l-1)\otimes e)$
imply $H^{-1}(e|_D)=0$ for general $D\in|H|$ by Lemma \ref{lemma-injective-sm}.
Thus, $H^*(F_k(-l)\otimes e|_D)$ is concentrated in degree 0 and of the correct
dimension $h^0(F_k(-l)\otimes e|_D)=p_k(-l)-p_k(-l-1)=p_k'(-l)$.

We fix a smooth $D$ such that $e|_D$ is a sheaf. Therefore, homology sheaves
$e^i$ in degrees $i\neq0$ are either zero or have zero-dimensional support.
(Support of dimension one or higher would be detected by a general $D\in|H|$.)
Looking at the Eilenberg-Moore spectral sequence
\[ E_2^{p,q} = \Ext_X^q(\ko_X(k),e^{-p}) \Rightarrow H^{p+q}(e(-k)), \]
we see that it has non-zero $E_2$ terms at most in the row $q=0$ and the
column $p=0$.

By induction, $e^0|_D$ is a 0-regular sheaf without higher cohomology. Then
the following piece of the long exact sequence
\[ 0=H^i(e^0|_D(k)) \to H^{i+1}(e^0(k-1)) \to H^{i+1}(e^0(k))
                    \to H^{i+1}(e^0|_D(k))=0 \]
(valid for $i\geq0$ and $k\geq-n-1$) shows
 $H^{i+1}(e^0(k-1))\cong H^{i+1}(e^0(k))$.
Since these vector spaces are zero for $k\gg0$, we see that there are no
non-trivial terms in the spectral sequence except possibly $E_2^{*,0}$ and
$E_2^{0,1}$. Hence, the only non-zero differential that might occur is
$E_2^{1,0}=H^1(e^0(k))\to E_2^{0,2}=H^0(e^{-2}(k))$. As by assumption
$H^1(e)=H^2(e)=0$, this map has to be an isomorphism. Now $e^{-2}$ is a sheaf
supported on points, so that $h^0(e^{-2}(k))$ does not depend on $k$. Hence
$h^1(e^0(k))$ does not depend on $k$ either.

From now on we proceed as in Mumford's proof on regularity
\cite[page 102]{Mum-CuOnSf}. Consider the following commutative diagram
\[ \xymatrix{
 V\otimes H^0(e^0) \ar[r]^{\alpha_0} \ar[d]^{\beta_0} &
 V\otimes H^0(e^0|_D) \ar[r]^{\alpha_1} \ar[d]^{\beta_1} &
 V\otimes H^1(e^0(-1)) \ar[d]^{\beta_2}
\\
 H^0(e^0(1)) \ar[r]^{\gamma_0} &
 H^0(e^0(1)|_D) \ar[r]^{\gamma_1} &
 H^1(e^0)
} \]
where the horizontal maps are induced from the triangles
$e^0(-1)\to e^0\to e^0|_D$ (top row) and $e^0\to e^0(1)\to e^0(1)|_D$
(bottom row) and the vertical maps correspond to composition with
$V=\Hom(\ko_X,\ko_X(1))$. The isomorphism $H^1(e^0(-1))\isom H^1(e^0)$
implies $\alpha_1=0$, hence $\alpha_0$ is surjective. Next, $e^0|_D$
is globally generated (as it is 0-regular); together with $H^1(e^0|_D)=0$
this shows that the evaluation map $\beta_1$ is surjective. Hence
$\gamma_1=0$. As we also have $H^1(e^0|_D)=0$, this is turn implies that
the natural map $H^1(e^0)\to H^1(e^0(1))$ is an isomorphism. But then
$h^1(e^0)=h^1(e^0(k))$ for all $k\gg0$, which forces $H^1(e^0)=0$.

Hence the spectral sequence has no non-zero terms outside of the row
$q=0$ and thus degenerates at the $E_2$ level. Since $E_\infty^{p+q}$
is known by assumption, this proves $H^0(e^i)=0$ for $i\neq0$. Since
these $e^i$ are supported on points, this implies $e^i=0$ and hence
$e\cong e^0$ is indeed isomorphic to a sheaf concentrated in degree 0.
The 0-regularity is an obvious consequence of the assumptions.
\end{proof}


\subsection{Purity conditions} \label{Purity-Conditions}

In this subsection, we formulate a homological purity condition for
$0$-regular sheaves on a projective variety $X$ with very ample
polarization $\ko_X(1) = \ko_X(H)$. Since this condition is needed only
for the Gieseker stability part of the Comparison Theorem, the reader
interested exclusively in slope stability may skip this subsection.

Our key result for detecting 0-dimensional subsheaves is:

\begin{locallemma}\label{pure1}
Let $E$ be a sheaf on a projective variety $X$ with very ample
polarization $\ko_X(1) = \ko_X(H)$. Let $M=h^0(E)$ and denote by
$E_0\subset E$ the maximal subsheaf of dimension zero. Then,
$E_0 = 0$ if and only if $h^0(E(-M))=0$.
\end{locallemma}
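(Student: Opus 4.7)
The plan is to treat the two implications separately; the reverse is essentially immediate, while the forward direction carries the substance. For the reverse direction $h^0(E(-M)) = 0 \Rightarrow E_0 = 0$: if $E_0 \neq 0$, then since $E_0$ is $0$-dimensional the canonical identification $E_0 \otimes \ko_X(-M) \cong E_0$ shows that the inclusion $E_0 \hookrightarrow E$ twists to $E_0 \hookrightarrow E(-M)$, whence $h^0(E(-M)) \geq h^0(E_0) > 0$.

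For the forward direction I argue by contradiction: assume $E_0 = 0$ and $h^0(E(-M)) > 0$. A nonzero $s \in H^0(E(-M))$ is the same as a nonzero morphism $\phi : \ko_X(M) \to E$. Since $\ko_X(M)$ is a line bundle, $\ker(\phi) = \ki \cdot \ko_X(M)$ for a unique ideal sheaf $\ki \subseteq \ko_X$; setting $Z := V(\ki)$, the image of $\phi$ is canonically $\ko_Z(M)$, and one obtains an injection $\ko_Z(M) \hookrightarrow E$. The hypothesis $E_0 = 0$ forbids any $0$-dimensional subsheaf of $\ko_Z(M)$, so $Z$ can have no $0$-dimensional associated component; in particular $\dim Z \geq 1$. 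Taking global sections yields $h^0(\ko_Z(M)) \leq h^0(E) = M$, so the proof will be complete once I establish the reverse bound $h^0(\ko_Z(M)) \geq M+1$.

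The main obstacle is precisely this last bound, because $Z$ need not be integral or even reduced. My strategy is to reduce to an integral positive-dimensional component $Z_1 \subseteq Z_{\mathrm{red}}$, which exists since every associated component of $Z$ is positive-dimensional. On $Z_1$ the line bundle $\ko_X(1)|_{Z_1}$ is very ample and globally generated by the image of $V = H^0(\ko_X(1))$; since a single section cannot globally generate a nontrivial line bundle on a positive-dimensional variety, this image has dimension at least $2$, so I may pick $\sigma, \tau \in V$ with $\sigma|_{Z_1}$ and $\tau|_{Z_1}$ linearly independent in $H^0(\ko_{Z_1}(1))$.

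The $M+1$ monomial sections $\sigma^a \tau^{M-a} \in H^0(\ko_X(M))$, for $a = 0, \dots, M$, will then restrict to $M+1$ linearly independent elements of $H^0(\ko_Z(M))$. Any nontrivial relation among these restrictions, further restricted to the integral variety $Z_1$, would yield a polynomial identity $\sum c_a (\sigma|_{Z_1})^a (\tau|_{Z_1})^{M-a} = 0$ in $H^0(\ko_{Z_1}(M))$. Dividing by $(\tau|_{Z_1})^M$ on the dense open where $\tau|_{Z_1}$ is nonvanishing shows that the rational function $\sigma|_{Z_1}/\tau|_{Z_1}$ on $Z_1$ satisfies a polynomial equation of degree $\leq M$, hence takes only finitely many values on the integral variety $Z_1$ and is therefore constant --- contradicting the linear independence of $\sigma|_{Z_1}$ and $\tau|_{Z_1}$. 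This delivers the bound $h^0(\ko_Z(M)) \geq M+1$ and closes the argument.
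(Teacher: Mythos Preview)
Your proof is correct and takes a genuinely different route from the paper's. The paper argues the contrapositive by a pigeonhole on the weakly decreasing sequence $h^0(E), h^0(E(-1)), \dots, h^0(E(-M))$: if the last term is positive, two consecutive terms must agree, say $h^0(E(-k))=h^0(E(-k-1))>0$; the subsheaf $E'\subseteq E(-k)$ generated by global sections then satisfies $h^0(E'(-1))=h^0(E')$, and restriction to a general hyperplane $D\in|H|$ forces $E'|_D=0$, so $E'$ is zero-dimensional and $E_0\neq0$. Your argument instead fixes a single section of $E(-M)$, identifies its image with $\ko_Z(M)$, uses $E_0=0$ to force $\dim Z\geq1$, and then obtains a contradiction by producing $M+1$ independent monomial sections of $\ko_Z(M)$ against the bound $h^0(\ko_Z(M))\leq h^0(E)=M$. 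Your approach is arguably more direct and avoids the auxiliary claim $h^0(E'(-1))=h^0(E')$ (which the paper states without further comment and requires a short verification). The paper's hyperplane-restriction method, on the other hand, dovetails with the techniques used elsewhere in that section and does not invoke the algebraically closed hypothesis, whereas your monomial argument uses it at the step where $\sigma|_{Z_1}/\tau|_{Z_1}$, being algebraic over $k$, must lie in $k$.
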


\begin{proof}
If $E_0\ne0$, then we have $h^0(E(k))\ne0 $ for all $k \in \IZ$. So we
only need to show that $h^0(E(-M)) > 0$ implies $E_0 \ne 0$. We consider
the decreasing sequence $M=h^0(E), h^0(E(-1)), \dots , h^0(E(-M))$. If
$h^0(E(-M)) > 0$, then there is an integer $k$ with
$h^0(E(-k))=h^0(E(-k-1))>0$. Let $E'$ be the image of the morphism
$H^0(E(-k))\otimes\ko_X\to E(-k)$. The sheaf $E'$ is globally generated and
satisfies the condition $h^0(E'(-1))=h^0(E')$. A general hyperplane $D\in|H|$
meets the associated locus of $E'$ transversally, and thus yields a short
exact sequence $0 \to E'(-1) \to E' \to E'|_D \to 0$. Since $E'$ is globally
generated, the sections of $E'$ also generate $E'|_D$. However, all these
sections come from $E'(-1)$. Thus $E'|_D = 0$. We conclude that the support
of $E'$ is of dimension zero.
\end{proof}

Now let $E$ be a coherent sheaf on $X$ with Hilbert polynomial $p=p_E$ of
degree $d$. Assume that $E$ is 0-regular, i.e.\ $H^i(E(-i))=0$ for $i>0$.
By \cite{Mum-CuOnSf}, this implies that $E(l)$ is globally generated for
$l\geq0$ and also that $H^i(E(l))=0$ for all $i>0$, $l\geq0$.
Set $M:=p(0)=h^0(E)$. We consider the dimension filtration of $E$
\[ 0=E_{-1} \subset E_0 \subset E_1 \subset \dots \subset E_d = E
\qquad \text{with } E_k/E_{k-1} \text{ pure of dimension }k.\]
As $E$ is globally generated, there exists a Quot scheme
$Q:=\Quot^p_{X}(\ko_X^{M})$ of finite type which parameterizes all
0-regular sheaves with Hilbert polynomial $p$.
In particular, given a coherent sheaf $F$ on $X$, there exists a universal
upper bound $B$ (depending only on $F$, $p$ and $H$) such that
$B\geq h^1(F\otimes E\otimes\ko_{H_1}\otimes\dots\otimes\ko_{H_m})$
for all $E\in Q$, $m\in\{0,\dots,d-1\}$ and $H_i\in|H|$.

\begin{localproposition} \label{proposition-purity-conditions}
Let $\ko_X(1)=\ko_X(H)$ and $p$ be as above. There exists a vector bundle
$F$ on $X$ depending only on $p$ and $\ko_X(1)$ such that for any 0-regular
sheaf $E$ on $X$ with Hilbert polynomial $p_E=p$ holds:
$E$ is pure if and only if $\Hom(F,E)=0$.
\end{localproposition}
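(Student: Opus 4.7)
My plan is to reduce the purity test to repeated applications of Lemma \ref{pure1} via restriction to generic linear sections of each codimension $0, 1, \dots, d-1$, and then to package the resulting conditions into a single $\Hom$-vanishing using the universal-divisor vector bundles $G_m$ of Section 2.2.

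First, I would establish the following geometric characterization: a $0$-regular sheaf $E$ of dimension $d$ is pure if and only if, for each $k = 0, 1, \dots, d-1$ and each sufficiently general complete intersection $Z_k = H_1 \cap \dots \cap H_k$ of $k$ members of $|H|$, the restriction $E|_{Z_k}$ contains no nonzero $0$-dimensional subsheaf. The forward direction follows because purity is preserved under general hyperplane section. For the converse, any subsheaf $T \subset E$ of dimension exactly $k$ yields, for a $Z_k$ avoiding the associated primes of both $T$ and $E/T$, a nonzero $0$-dimensional subsheaf $T|_{Z_k} \hookrightarrow E|_{Z_k}$.

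Next, by Lemma \ref{pure1} applied on $Z_k$ with the inherited polarization, the condition identified above becomes $h^0(E|_{Z_k}(-M_k)) = 0$ with $M_k := h^0(E|_{Z_k})$. Because $E$ is $0$-regular with Hilbert polynomial $p$, the restriction $E|_{Z_k}$ is again $0$-regular on $Z_k$, with Hilbert polynomial equal to the $k$-fold iterate of the discrete derivative $p'(t) := p(t) - p(t-1)$ used in the paper; hence $M_k$ depends only on $p$. The uniform upper bound $B$ introduced before the proposition ensures that the relevant higher cohomologies vanish uniformly across the Quot scheme $Q$ of $0$-regular sheaves with Hilbert polynomial $p$.

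The third and most delicate step is to repackage these vanishings, taken over all $k$ and all general $Z_k$, as a single statement $\Hom_X(F, E) = 0$ for one vector bundle $F$ on $X$. I would use the bundles $G_m = \FM_\kg(\ko_\PH(m))$ from Construction \ref{Sm}, built from the universal hyperplane divisor $\kg \subset \PH \times X$, and their iterated tensor products $\tilde F_k := G_{m_1} \otimes \cdots \otimes G_{m_k}$, which ``integrate'' the restriction to codimension-$k$ linear sections over the parameter space. For the $m_i$ large enough in terms of $p$ and $B$, and for suitably chosen integers $N_k$ depending only on $p$, the group $H^0(E \otimes \tilde F_k(-N_k H))$ vanishes if and only if $h^0(E|_{Z_k}(-M_k)) = 0$ for generic $Z_k$; setting $F := \bigoplus_{k=0}^{d-1} \tilde F_k^{\vee}(N_k H)$ then realizes the desired vector bundle. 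The main obstacle lies here: one must verify that this family-integration does not introduce spurious cancellations, so that a global section of $E \otimes \tilde F_k(-N_k H)$ is actually witnessed by a $0$-dimensional subsheaf on some fiber $Z_k$. The $0$-regularity of $E$ together with the uniform bound $B$ guarantee that $E \otimes \tilde F_k$ has cohomology concentrated in degree $0$ and of controlled size, which is what makes the reduction faithful.
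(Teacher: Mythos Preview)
Your approach is essentially the paper's: reduce purity to the vanishing of $H^0(E(-M)|_{Z_k})$ for $k=0,\dots,d-1$ via Lemma~\ref{pure1}, then replace these hyperplane restrictions by iterated tensoring with the bundles $G_{m_k}$. The paper uses a single twist $M=p(0)$ throughout (which suffices since $M\ge h^0(E|_{Z_k})$ for all $k$ by $0$-regularity) and sets $F=(F_0^\vee\oplus\cdots\oplus F_{d-1}^\vee)\otimes\ko_X(M)$ with $F_k=G_{m_1}\otimes\cdots\otimes G_{m_k}$, but your separate $M_k,N_k$ are harmless variations.

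Your final paragraph, however, misidentifies the mechanism. The bound $B$ does not make higher cohomology vanish, and $E\otimes\tilde F_k(-N_k)$ is \emph{not} concentrated in cohomological degree~$0$: its $H^1$ is typically nonzero and is precisely the quantity that must be controlled. The actual argument is an inductive application of Lemma~\ref{lemma-injective-sm} with $a=\ko_X$, $b=\ko_X(1)$, and $c=E(-M)\otimes F_{k-1}[1]$. The hypothesis $\Hom^{-1}(a,c)=0$ is the vanishing $H^0(E(-M)\otimes F_{k-1})=0$ established at the \emph{previous} step, and the threshold $m_k\ge(v-1)(\hom(b,c)-1)=(v-1)(h^1(E(-M-1)\otimes F_{k-1})-1)$ is where $B$ enters. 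Thus the individual equivalences you assert for each $k$ are coupled: the one for $k$ only holds given the vanishing for all smaller indices, and $B$ provides a uniform upper bound on the relevant $h^1$ groups (for $E$ ranging over the Quot scheme $Q$ and including the intermediate hyperplane restrictions), not a vanishing statement.
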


\begin{proof}
Restriction of $E$ to a general hyperplane $H_i \in |H|$ commutes with the
dimension filtration: $(E|_{H_i})_k = E_{k+1}|_{H_i}$.
Coupled with Lemma \ref{pure1}, this shows that $E$ is pure if and only if
$H^0(E(-M) \otimes \ko_{H_1} \otimes \dots \otimes \ko_{H_m}) = 0$ for
all $m=0, \dots , d$ and general hyperplanes $H_i \in |H|$.
This condition can be checked using Lemma \ref{lemma-injective-sm}, as
done in the proof of Proposition \ref{proposition-sheaf-conditions}:
We define sequences of integers $(m_1,m_2, \dots, m_{d-1})$ and of
vector bundles $F_0, F_1, \dots , F_{d-1}$ recursively by \\
\begin{tabular}{lp{14cm}}
(i)   & $F_0 := \ko_X$\\
(ii)  & $\tilde m_k \geq
          h^1(F_{k-1} \otimes E(-M-1) \otimes \ko_{H_1}
                               \otimes \dots \otimes \ko_{H_m})$
        for all sheaves $[E] \in Q$, \\
      & all $m\in\{0,\dots d-1\}$, and all hyperplanes
       $H_i \in | H| $.\\
(iii) & $m_k=(h^0(\ko_X(1))-1)(\tilde m_k -1)$ \\
(iv)  & $F_k = F_{k-1} \otimes G_{m_k}$ where $G_{m_k}$ is the vector bundle
        from Subsection \ref{Sheaf-Conditions}.
\end{tabular}

\noindent
(We only need condition (ii) for generic hyperplanes. Note that for almost
all choices of the $H_i$, the tensor product is underived, thus just a sheaf
supported on an $m$-codimensional complete intersection.)

Proceeding as in the proof of Proposition \ref{proposition-sheaf-conditions},
the vanishing of $H^0(E(-M) \otimes F_0)$, 
\dots , $H^0(E(-M) \otimes F_{d-1})$ is equivalent to the vanishing of $H^0(E(-M))$, $H^0(E(-M)\otimes\ko_{H_1})$, \dots ,
$H^0(E(-M)\otimes \ko_{H_1} \otimes \dots \otimes \ko_{H_{d-1}})$ for general hyperplanes $H_1, H_2, \dots H_{d-1}$ in the linear system $|H|$.
By Lemma \ref{pure1}, the last condition is equivalent to $E_{d-1}=0$.
Setting $F:=(F_0\dual\oplus\cdots\oplus F_{d-1}\dual)\otimes\ko_X(M)$ yields the
required vector bundle.
\end{proof}


\subsection{Semistability on curves}

Let $X$ be a smooth projective curve of genus $g$ over $k$. Fix integers
$r>0$ and $d$. Let $\ko_X(1)$ be a fixed line bundle of degree one.

\begin{localtheorem} \label{curves}
For a coherent sheaf $E$ on $X$ of rank $r$ and degree $d$, the
following conditions are equivalent:

\begin{tabular}{lp{\linewidth-5em}}
(i)   & $E$ is a semistable vector bundle.\\
(ii)  & There is a sheaf $0\ne F$ with $E\in F\orth$, i.e.\
        $\Hom(F,E)=\Ext^1(F,E)=0$. \\
(iii) & There exists a sheaf $F$ on $X$ with
        $\det(F)\cong\ko_X(rd-r^2(g-1))$ and $\rk(F)=r^2$ such that
        $\Hom(F,E)= \Ext^1(F,E) =0$.\\
\end{tabular}
\end{localtheorem}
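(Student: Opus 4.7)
The plan is to close the cycle (iii) $\Rightarrow$ (ii) $\Rightarrow$ (i) $\Rightarrow$ (iii); the first implication is immediate, since (iii) is a special case of (ii) (note that $\rk(F) = r^2 > 0$ forces $F \neq 0$).

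For (ii) $\Rightarrow$ (i), I would run the classical Faltings-style argument. From $\Hom(F,E) = \Ext^1(F,E) = 0$ one gets $\chi(F,E) = 0$. Riemann-Roch on the smooth curve $X$ reads $\chi(F,G) = r(F)\deg(G) - r(G)\deg(F) + r(F)r(G)(1-g)$ for any coherent $G$, so in particular $\mu(F) = \mu(E) - (g-1)$. Before using this I would dispose of torsion: on a smooth curve every coherent sheaf decomposes as torsion plus locally free, and for any nonzero locally free direct summand $F'$ of $F$ and any torsion subsheaf $T \subset E$ the formula $\chi(F', T) = \rk(F')\cdot\length(T)$ forces $\Hom(F', T) \neq 0$, contradicting $\Hom(F, E) = 0$ via $T \hookrightarrow E$. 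Hence $E$ is locally free and $F$ has a nonzero locally free part. For any nonzero subsheaf $E' \hookrightarrow E$, the inclusion induces $\Hom(F, E') \hookrightarrow \Hom(F, E) = 0$, giving $\chi(F, E') \leq 0$. Riemann-Roch now simplifies to $\chi(F, E') = r(F)\, r(E')\, (\mu(E') - \mu(E))$, which forces $\mu(E') \leq \mu(E)$.

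For (i) $\Rightarrow$ (iii), I would seek $F$ in the form $F_0^{\oplus r}$ for a rank $r$ bundle $F_0$. Since $\Jac(X)$ is divisible, there is a line bundle $L$ on $X$ with $L^r \cong \ko_X(rd - r^2(g-1))$; in particular $\deg L = d - r(g-1)$. I then look for $F_0$ with $\det(F_0) = L$ and $\Hom(F_0, E) = \Ext^1(F_0, E) = 0$; then $F := F_0^{\oplus r}$ automatically has rank $r^2$ and $\det F = L^r \cong \ko_X(rd - r^2(g-1))$. The numerology is arranged so that $F_0^\vee \otimes E$ has rank $r^2$ and slope exactly $g-1$, so $\chi(F_0^\vee \otimes E) = 0$; the required vanishing is then equivalent to $F_0$ lying outside the generalized theta locus $\Theta_E = \{\, F_0 \in M(r, L) : H^0(F_0^\vee \otimes E) \neq 0 \,\}$ in the moduli space $M(r, L)$ of rank $r$ bundles with determinant $L$.

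The main obstacle is the nonemptiness of the complement of $\Theta_E$ for semistable $E$. This is the classical theorem of Faltings (with subsequent refinements by Raynaud and Seshadri), which asserts that $\Theta_E$ is a proper subvariety --- in fact a divisor --- of $M(r, L)$ whenever $E$ is semistable. Granting this result, any $F_0$ in the complement yields $F := F_0^{\oplus r}$ satisfying (iii), and the cycle closes.
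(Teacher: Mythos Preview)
Your approach matches the paper's: both dispatch (ii)$\Rightarrow$(i) by the Riemann--Roch computation and defer (i)$\Rightarrow$(iii) to the literature. Two small points. In your torsion paragraph there is a circularity: the displayed argument shows $E$ is torsion-free \emph{assuming} $F$ has a nonzero locally free summand, yet you then assert both conclusions together; you must first exclude $\rk F=0$ separately (easy: if $F$ is pure torsion then $\Ext^1(F,E_{\mathrm{free}})\neq0$ for the rank-$r$ locally free part $E_{\mathrm{free}}$ of $E$, contradicting $\Ext^1(F,E)=0$). For (i)$\Rightarrow$(iii), your construction $F=F_0^{\oplus r}$ with $F_0\in M(r,L)\setminus\Theta_E$ is perfectly good, but the key input $\Theta_E\subsetneq M(r,L)$ for \emph{every} semistable $E$ is not Faltings' theorem --- Faltings produces \emph{some} orthogonal sheaf without this effective rank control, and Raynaud's contribution goes the other way, exhibiting base points in related theta systems rather than ruling them out. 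The effective bound you are invoking is precisely Popa's result \cite[Theorem~5.3]{Popa}, which is exactly what the paper cites for (i)$\Leftrightarrow$(iii).
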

\begin{proof}
The equivalence (i) $\Longleftrightarrow$ (ii) is Falting's characterisation
of semistable sheaves on curves \cite{Faltings}. One direction is easy: For
 $E'\subset E$ with $\mu(E')>\mu(E)$, we have $\mu(E'\otimes F\dual)>\mu(E\otimes F\dual)$,
hence by Riemann-Roch $\chi(E'\otimes F\dual)>\chi(E\otimes F\dual)=0$. But then
 $h^0(E'\otimes F\dual)>0$, contradicting $h^0(E\otimes F\dual)=0$.
The refinement (i) $\Longleftrightarrow$ (iii) is the content of Popa's paper
\cite[Theorem 5.3]{Popa}.
\end{proof}

Based on this result, we can give two Postnikov data for semistable
bundles on $X$. Introduce the slope $\mu:=d/r$ and some further semistable vector
bundles and integers:
\begin{align*}
  A & := \ko_X((r^2+1)(\lfloor\mu\rfloor-\mu) + 2r^2(1-g) -3g), &&&
m_1 & := r^2(r^2+1)\left( \mu - \lfloor\mu\rfloor + g + 2 \right), \\
  B & := \ko_X^{r^2+1}(\lfloor\mu\rfloor -3g), &&&
m_2 & := (\hom(A,B)-1)(m_1-1) \,.
\end{align*}

\begin{localproposition} \label{curves-raynaud}
Let $X$ be a smooth projective curve $r >0$, and $d$ two integers and
$A$, $B$, $m_1$, and $m_2$ as above.
For an object $E\in\Db(X)$ the following conditions are equivalent:
\begin{tabular}{lp{\linewidth-4em}}
(i)   & $E$ is a semistable vector bundle of rank $r$ and degree $d$. \\
(ii)  & The object $E$ satisfies the following Postnikov conditions: \\
      & \begin{tabular}{ll}
        (1) & $\hom(A,E) = \hom(B,E) = m_1$,
              $\hom^i(A,E) = \hom^i(B,E) = 0$ for $i\neq0$. \\
        (2) & There is a cone $A \to B \to C$ in $\Db(X)$ with $\Hom^*(C,E)=0$. \\
        \end{tabular} \\
(iii) & The object $E$ satisfies the following Postnikov conditions: \\
      & \begin{tabular}{ll}
        (1) & $\hom(A,E) = \hom(B,E) = m_1$,
              $\hom^i(A,E) = \hom^i(B,E) = 0$ for $i\neq0$. \\
        (2) & $\Hom^{-1}(S^{m_2}(A,B),E) =0$.
        \end{tabular}
\end{tabular}
\end{localproposition}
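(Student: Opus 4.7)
The plan is to run the cycle (i) $\Rightarrow$ (iii) $\Leftrightarrow$ (ii) $\Rightarrow$ (i), with the bridge (ii) $\Leftrightarrow$ (iii) being an almost formal consequence of Lemma \ref{lemma-injective-sm} and the dimension matching built into condition (1). The two directions of equivalence with (i) rest on the Faltings--Popa characterisation of semistability (Theorem \ref{curves}).

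For (ii) $\Leftrightarrow$ (iii) I specialize Lemma \ref{lemma-injective-sm} to $a=A$, $b=B$, $c=E$, $V=\Hom(A,B)$. The choice $m_2=(\hom(A,B)-1)(m_1-1)$ exactly meets the bound of that lemma, so $\Hom^{-1}(S^{m_2}(A,B),E)=0$ is equivalent to the generic injectivity of the map $\rho_v:\Hom(B,E)\to\Hom(A,E)$ for $v\in V$. Under condition (1) both Hom spaces have the same dimension $m_1$, so generic injectivity upgrades to the existence of some $v$ for which $\rho_v$ is an isomorphism; equivalently, the cone $C=\Cone(v:A\to B)$ satisfies $\Hom^*(C,E)=0$, as one reads off from the long exact $\Hom(-,E)$-sequence of the triangle $A\to B\to C$ together with the vanishings in (1).

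For (i) $\Rightarrow$ (iii) I start from Theorem \ref{curves}\,(iii) (Popa's refinement of Faltings): semistability of $E$ produces a vector bundle $F$ of rank $r^2$ with $\det F=\ko_X(rd-r^2(g-1))$ and $\Hom(F,E)=\Ext^1(F,E)=0$. The explicit formulas for $A$ and $B$ are calibrated so that a generic morphism $A\to B$ has $F$ as its cokernel: $\deg B_0-\deg A$ is large enough that $\hom(A,B)$ is positive and the resulting rank-$r^2$ cokernel has the required determinant, i.e.\ is a Raynaud/Popa bundle orthogonal to $E$. The vanishings in (1) then reduce to Riemann-Roch together with semistability: the twists $A^{-1}\otimes E$ and $B_0^{-1}\otimes E$ are semistable of slope $>2g-2$, hence have vanishing $H^1$, and their Euler characteristics work out to $m_1$ by direct calculation from the chosen constants. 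Combined with the equivalence (ii) $\Leftrightarrow$ (iii) already established, this also produces the cone required in (ii).

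For (ii) $\Rightarrow$ (i) I proceed in three substeps. First, (1) combined with Riemann-Roch on the curve yields $\chi(A,E)=\chi(B,E)=m_1$; viewed as two linear equations in the rank and degree of $E$, the degrees of $A$ and $B$ are chosen to make this system non-degenerate with unique solution $(\rk(E),\deg(E))=(r,d)$. Second, I upgrade $E$ from an object of $\Db(X)$ to an honest sheaf in degree $0$: the vanishings $\hom^i(A,E)=\hom^i(B,E)=0$ for $i\ne 0$ (with $A$ and $B_0$ line bundles) control the Eilenberg--Moore spectral sequences computing $\Hom^*(A,E)$ and $\Hom^*(B,E)$, forcing the cohomology sheaves $e^i$ to vanish for $i\ne 0$, following the strategy of Proposition \ref{proposition-sheaf-conditions} but in the considerably easier one-dimensional setting. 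Third, having identified $E$ with a rank-$r$ degree-$d$ sheaf, the cone $C$ in (2) is a non-zero (possibly shifted) sheaf with $\Hom^*(C,E)=0$, so the Faltings half of Theorem \ref{curves} gives semistability. The main obstacle is the second substep: turning Postnikov data on just two test objects $A,B$ into a sheafification statement for $E$. The rest is essentially numerical bookkeeping --- verifying that the explicit formulas simultaneously make the Riemann-Roch system non-degenerate, realise a Raynaud/Popa bundle as the cokernel of a generic $A\to B$, and keep the relevant slopes above $2g-2$ for acyclicity.
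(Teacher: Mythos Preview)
Your cycle and the bridge (ii)$\Leftrightarrow$(iii) via Lemma~\ref{lemma-injective-sm} match the paper. The real divergence is in (ii)$\Rightarrow$(i), where your second substep is both heavier than necessary and not clearly complete. The paper does not run a spectral-sequence argument at all: it uses that a smooth curve is hereditary, so every object splits as $E\cong\bigoplus_i e^i[-i]$. Then $\Hom^*(C,E)=0$ breaks up as $\Hom^*(C,e^i)=0$ for each $i$, and Faltings forces every nonzero $e^i$ to be semistable of slope $d/r$. The choice of $A$ guarantees $\mu(e^i)-\mu(A)>2g-2$, whence $\Ext^1(A,e^i)=0$ and $\hom(A,e^i)=\chi(A,e^i)>0$; so $\Hom^i(A,E)\neq0$ whenever $e^i\neq0$, and condition~(1) kills $e^i$ for $i\neq0$. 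By contrast, your Eilenberg--Moore route has only two line bundles to test against, whereas the dimension-one case of Proposition~\ref{proposition-sheaf-conditions} already needs three test objects; without invoking the splitting it is not clear the differentials $d_2\colon E_2^{p,1}\to E_2^{p+2,0}$ can be controlled. You should also argue, as the paper does, that the map $A\to B$ in (2) is nonzero (else $C\cong B\oplus A[1]$ and $\Hom^*(C,E)=0$ would contradict $\hom(B,E)=m_1>0$), hence injective since $A$ is a line bundle, so that $C$ is an honest sheaf before Faltings is invoked.

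One smaller correction in (i)$\Rightarrow$(ii)/(iii): it is not that a \emph{generic} morphism $A\to B$ has the Popa bundle $F$ as cokernel --- different morphisms give different cokernels. What is needed, and what the paper cites from \cite[Lemma~2.1]{hein-raynaud}, is that the specific $F$ produced by Theorem~\ref{curves}(iii) sits in \emph{some} short exact sequence $0\to A\to B\to F\to0$; this yields (ii) directly.
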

\begin{proof}
(i)$\implies$(ii)
$E$ is semistable of degree $d$ and rank $r$, hence by Theorem \ref{curves}
there exists a sheaf $F$ with $\det(F)\cong\ko_X(rd-r^2(g-1))$ and $\rk(F)=r^2$
such that $\Hom^*(F,E)=0$. This implies that $F$ is also a semistable bundle.
Thus (see \cite[Lemma 2.1]{hein-raynaud}), it appears in a short exact
sequence $0\to A\to B\to F \to0$. Since $\mu(E^0)-\mu(A)>2g-2$, we
see that $\Hom^i(A,E)=0$ for $i\ne0$. Using the Riemann-Roch Theorem,
we deduce that $\hom(A,E)=m_1$. The same works with $B$ instead of $A$.
We eventually conclude that (1) holds.
Setting $C=F$ we obtain the object required in condition (2).

(ii)$\implies$(i) The conditions (1) and (2) imply that the morphism $A\to B$
is not zero. Since $A$ is a line bundle, this morphism is injective; hence
the distinguished triangle of (2) corresponds to a short exact sequence of
sheaves $0\to A\to B\to C \to0$. As the global dimension of a smooth curve
is one, we have $E\cong\bigoplus E^i[-i]$. The condition $\Hom^*(C,E)=0$
implies that all the $E^i$ are semistable of slope $d/r$. If $E^i\ne0$, then
$\Hom^i(A,E)\ne0$. So from condition (1) we deduce that $E$ is a sheaf object.
As the slopes of $A$ and $B$ differ, we can read off the Hilbert polynomial of
$E^0$ from the dimensions $\hom(A,E)$ and $\hom(B,E)$. Altogether, $E^0$ is of
rank $r$ and degree $d$.

(ii)$\iff$(iii) Any morphism $\alpha:A\to B$ gives a distinguished triangle
as in (ii). The total homomorphism space $\Hom^*(C,E)$ is zero if and only if
$\Hom(B,E)\to\Hom(A,E)$ is a bijection. Because we work with finite dimensional
$k$-vector spaces, this is equivalent to the injectivity of $\Hom(B,E)\to\Hom(A,E)$.
Thus, by Lemma \ref{lemma-injective-sm} we are done.
\end{proof}

For a more detailed description and the relation to the Theta divisor
and its base points see \cite[Theorems 2.12 and 3.3]{hein-raynaud} of
the first author.


\subsection{P-stability implies $\mu$-semistability}

\begin{localtheorem} \label{P-implies-mu}
{\bf(Comparison theorem for Mumford-Takemoto semistability)}\\
For a polarized normal projective Gorenstein variety $(X, \ko_X(1))$
and for a polynomial $p$ of degree $n=\dim(X)$, there exist sheaves
$C_{-m}$, $C_{-m+1}$, \dots , $C_n$, $D$
on $X$, and integers $N^i_j$ such that for an object
$E\in\Db(X)$ the following two conditions are equivalent:

\begin{tabular}{lp{14cm}}
(i)  & $E$ is a $\mu$-semistable sheaf concentrated in degree zero
       of Hilbert polynomial $p$.\\
(ii) & $\hom^i(C_j,E)= N_j^i$, and there exists a complex
       $C_\bullet = (C_n \rarpa{d_n} \dots \rarpa{d_1} C_0)$
       such that 
       $\Hom^*(C_\bullet,E)=0$, that is $E\in C_\bullet\orth$. \\
\end{tabular}
\end{localtheorem}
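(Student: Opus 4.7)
The plan is to combine three ingredients already assembled in the paper: the homological sheaf-detection of Proposition~\ref{proposition-sheaf-conditions}, the Mehta--Ramanathan restriction theorem for $\mu$-semistability, and the curve criterion of Proposition~\ref{curves-raynaud}. The $C_j$ with $j<0$ will form the passive part of the $P$-datum, forcing $E$ to be a $0$-regular sheaf with Hilbert polynomial $p$; the $C_j$ with $0\leq j\leq n$ will encode semistability of the restriction of $E$ to a sufficiently general complete intersection curve.

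For the passive part, I would use boundedness: the family of $\mu$-semistable sheaves of Hilbert polynomial $p$ is bounded, so after a uniform twist one may assume all of them are $0$-regular. Proposition~\ref{proposition-sheaf-conditions} then supplies the bundles $C_{-1},\dots,C_{-m}$ and integers $N_{-j}^{i}$ characterizing $0$-regular coherent sheaves with Hilbert polynomial $p$ by the equations $\hom^{i}(C_{-j},E)=N_{-j}^{i}$. A useful consequence is that under these conditions, $\Hom_{X}^{*}(A,E)$ is determined by $p$ whenever $A$ is a direct sum of line bundles $\ko_{X}(\ell H)$.

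For the active part, fix $k\gg 0$ so that Mehta--Ramanathan applies on curves $C:=D_{1}\cap\cdots\cap D_{n-1}$ cut out by general $D_{i}\in|kH|$, giving: $E$ is $\mu$-semistable with respect to $H$ if and only if $E|_{C}$ is semistable on $C$. The rank $r$ and degree of $E|_{C}$ are functions of $p$ and $k$, so the Raynaud-type bundles $A_{C},B_{C}$ of Proposition~\ref{curves-raynaud} are fixed. Choose $A_{X}:=\ko_{X}(aH)$ and $B_{X}:=\ko_{X}(bH)^{r^{2}+1}$ with $A_{X}|_{C}\otimes\det N_{C/X}^{\vee}\cong A_{C}$ and $B_{X}|_{C}\otimes\det N_{C/X}^{\vee}\cong B_{C}$, and enlarge $b-a$ if necessary so that $\Hom_{X}(A_{X},B_{X})\surject\Hom_{C}(A_{C},B_{C})$ is surjective. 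Let $K_{\bullet}\to i_{*}\ko_{C}$ be the Koszul resolution of the ideal of $C$, whose terms are direct sums of twists $\ko_{X}(-jkH)$. Define the active bundles $C_{0},\dots,C_{n}$ as the fixed terms of the total complex of the double complex $K_{\bullet}\otimes A_{X}\to K_{\bullet}\otimes B_{X}$; the horizontal differentials come from Koszul, while the vertical one is a variable morphism $\varphi\colon A_{X}\to B_{X}$. For any such $\varphi$, the corresponding Postnikov system has convolution $K_{\varphi}\cong i_{*}\Cone(\varphi|_{C})$ in $\Db(X)$ by the projection formula.

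The comparison is closed by Grothendieck--Verdier duality for $i\colon C\hookrightarrow X$: for any $F\in\Db(C)$,
\[
\Hom_{X}^{*}\bigl(i_{*}F,E\bigr)\cong \Hom_{C}^{*-(n-1)}\bigl(F\otimes\det N_{C/X}^{\vee},E|_{C}\bigr).
\]
Applied to $F=\Cone(\varphi|_{C})$, this identifies $\Hom_{X}^{*}(K_{\varphi},E)$ with $\Hom_{C}^{*-(n-1)}$ against a cone of the correct rank $r^{2}$ and determinant on $C$; the surjectivity $\Hom_{X}(A_{X},B_{X})\surject\Hom_{C}(A_{C},B_{C})$ then shows that the existence of $\varphi$ on $X$ producing a convolution in $E\orth$ is equivalent to the existence of a Raynaud cone on $C$ orthogonal to $E|_{C}$. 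Proposition~\ref{curves-raynaud} (whose numerical condition~(1) is automatic from the passive conditions) turns this into semistability of $E|_{C}$, equivalently $\mu$-semistability of $E$ via Mehta--Ramanathan. The main obstacle is coordinating parameters: the uniform $0$-regularity bound, the Mehta--Ramanathan threshold on $k$, the divisibility needed to absorb $\det N_{C/X}^{\vee}$ into the difference $b-a$, and the surjectivity of the $\Hom$-restriction must all be met simultaneously, possibly requiring an auxiliary line-bundle twist on $X$. Each individual ingredient is standard; the work is the bookkeeping.
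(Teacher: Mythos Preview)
Your overall strategy matches the paper's: passive conditions from Proposition~\ref{proposition-sheaf-conditions} force $E$ to be a $0$-regular sheaf of Hilbert polynomial $p$; restriction to a complete intersection curve, the curve criterion of Proposition~\ref{curves-raynaud}, and a Koszul resolution lift the orthogonal object back to $X$. The paper uses Serre duality on $X$ (this is where the Gorenstein hypothesis enters) where you invoke Grothendieck--Verdier duality for the closed immersion; these are equivalent. One minor point: you quote Mehta--Ramanathan, but the bound on $k$ must be uniform over all $E$ with Hilbert polynomial $p$, which the asymptotic statement does not give. The paper uses Langer's effective restriction theorem precisely for this reason.

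There is, however, a genuine gap in your direction (ii)$\Rightarrow$(i). You tacitly assume that any Postnikov system built from the fixed bundles $C_0,\dots,C_n$ has convolution of the form $i_*\Cone(\varphi|_C)$ for the chosen curve $C$ and some $\varphi\colon A_X\to B_X$. But the definition of P-stability lets \emph{all} the differentials $d_j$ vary, not just your vertical map $\varphi$; with arbitrary differentials among these direct sums of line bundles the convolution need not be supported on any curve at all, and then no curve criterion is available. This is exactly why the theorem statement lists an extra sheaf $D$, which your proposal never mentions. The paper takes $D=\ko_Z(-N)$ for a fixed $2$-codimensional linear section $Z$ and $N\gg0$, and uses the condition $\Hom^*(D,C_\bullet)=0$ to force the convolution to be concentrated on a curve (necessarily disjoint from $Z$); once that is known, a destabilising subsheaf of $E$ would destabilise the restriction to that curve and contradict $\Hom^*(C_\bullet,E)=0$. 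Without this device or an equivalent one, your argument only establishes the equivalence for the restricted class of Postnikov systems in which the Koszul differentials are fixed and only $\varphi$ varies, which is strictly weaker than condition~(ii).
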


\begin{localremark}
Condition (ii) of the theorem states that $E$ is P-stable for the
P-datum $(C_{-m},\dots,C_n,N)$.
\end{localremark}

\begin{proof}
The objects $C_{-m},\dots,C_n$ are defined in the proof of
(i)$\implies$(ii), in a manner independent of $E$. In order to define
the sheaf $D$, pick a large number $N\gg0$ and set $Z$ to be a
fixed 2-codimensional intersection of two hyperplanes of $|H|$. 
Let $D:=\ko_Z(-N)$.

(i)$\implies$(ii)
Suppose that $E$ is a $\mu$-semistable vector bundle with given Hilbert
polynomial $p:=p_E$. As semistability implies that $E$ appears in a
bounded family, there is an integer $d_1$ (depending only on $p$) such
that $E$ is $d_1$-regular. Hence by Proposition
\ref{proposition-sheaf-conditions} there are sheaves $C_{-m}$,
$C_{-m+1}, \, \dots ,\, C_{-1}$ and integers $N^i_j$ such that
$\hom^i(C_j,E)=N^i_j$ implies that $E$ is a $d_1$-regular sheaf of
Hilbert polynomial $p$.

By Langer's effective  restriction theorem \cite[Theorem 5.2]{langer},
there is a constant $d_2$ such that $E$ is $\mu$-semistable $\iff$ the
restriction $E|_Y$ is semistable for $Y = H_1\cap H_2\cap\dots\cap H_{n-1}$
a general complete intersection of hyperplanes $H_i \in |d_2H|$. By
Bertini's theorem, we may choose $Y$ to be a smooth curve embedded by
$\iota: Y \to X$. We also choose $Y$ to be disjoint from $Z$.
The semistability of $\iota^*E$ can be expressed (see Proposition
\ref{curves-raynaud} and its proof) by
$\Hom^*(\iota^*E,F)=0$ for some coherent sheaf $F$ on $Y$ which appears in a
short exact sequence
 $0 \to \iota^*\ko_X(d_5) \to \iota^*\ko_X^{d_3}(d_4) \to F \to 0$.
Adjointness $\iota_*\dashv\iota^*$ and Serre duality together with the
Gorenstein assumption yields
 $0 = \Hom^*_Y(\iota^*E,F) = \Hom^*_X(E,\iota_*F) =
       \Hom^*_X(\omega_X\inv[-n]\otimes\iota_*F,E)^*$,
i.e.\ $E\in(\omega_X\inv\otimes\iota_*F)\orth$.

Using the Koszul complex of $\ko_Y$ and the resolution of $F$, we find
that $\omega_X\inv \otimes \iota_*F$ has a resolution
$C_n \to C_{n-1} \to \dots C_0 \to \omega_X\inv \otimes \iota_*F$
where the sheaves $C_i$ are vector bundles on $X$ of the form
 $C_i =\left( \ko_X^{b_i}(d_4-id_2) \oplus \ko_X^{a_i}(d_5-(i-1)d_2) \right)
        \otimes \omega_X\inv$.
Hence the Postnikov system $C_\bullet=(C_n \to \dots \to C_0)$ has the
convolution $C_\bullet\cong\omega_X\inv\otimes\iota_*F$, and
$E\in C_\bullet\orth$. Also by choice of $Y$ we have $C_\bullet\in D\orth$,
as a generic 1-dimensional complete intersection $Y$ will miss $Z$.

(ii)$\implies$(i)
If $E$ is a complex satisfying the conditions of (ii), then $E$ is a
$d_1$-regular sheaf by Proposition \ref{proposition-sheaf-conditions}
and the choice of $C_{-m},\dots C_{-1}$. Suppose $E$ is not $\mu$-semistable.
Assume first that the convolution $C_\bullet$ is concentrated on a
curve $Y$. Then the destabilizing subobject also destabilizes the restriction
to $Y$ and forces $\Hom^*(C,E)\ne0$ by Proposition~\ref{curves-raynaud}. If
$C$ is not concentrated on a curve, then $\Hom^*(D,C)\ne0$. However, the
general curve $Y$ will not intersect with $Z$. Thus, $\Hom^*(D,C)=0$ forces
$C$ to be concentrated on a curve (which does not intersect $Z$), and we are
done.
\end{proof}


\subsection{P-stability implies Gieseker semistability}

\begin{localtheorem} \label{P-implies-Gieseker}
{\bf(Comparison theorem for Gieseker semistability)}\\
For a polarized projective variety $(X, \ko_X(1))$ and for a given
polynomial $p$ there exist sheaves $C_{-m}$, $C_{-m+1}$, \dots , $C_0$,
$C_1$, and $F$ on $X$, and integers $N^i_j$ such that for an object
$E\in\Db(X)$ the following three conditions are equivalent:

\begin{tabular}{lp{14cm}}
(i)   & $E$ is concentrated in degree zero, and a Gieseker semistable
        sheaf of Hilbert polynomial $p$. \\
(ii)  & $\hom^i(C_j,E)= N_j^i$, $\Hom(F,E)=0$ and there exists a
        distinguished triangle $C\to C_0\to C_1\to C[1]$
        in $\Db(X)$ such that $\Hom^*(C,E)=0$, that is $E\in C\orth$. \\
(iii) & $\hom^i(C_j,E)= N_j^i$, $\Hom(F,E)=0$ and
        $\Hom^{-1}(S^m(C_0,C_1),E) =0$ for $m\gg0$.
\end{tabular}
\end{localtheorem}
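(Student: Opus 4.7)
The plan is to mimic the proof of Theorem \ref{P-implies-mu} but with two substitutions: replace the ``restrict-to-a-curve + Faltings'' loop by the Álvarez-Cónsul-King characterization of Gieseker semistability \cite{ACK}, and encode purity by means of Proposition \ref{proposition-purity-conditions} rather than leaving it as a side-effect of the curve argument. The passive sheaves $C_{-m},\dots,C_{-1}$ and the numbers $N^i_j$ for $j<0$ are provided (independently of $E$) by Proposition \ref{proposition-sheaf-conditions}, which will turn $\hom^i(C_j,E)=N^i_j$ for $j<0$ into the assertion that $E$ is a $d_1$-regular sheaf of Hilbert polynomial $p$; here $d_1$ is chosen uniformly using boundedness of the family of Gieseker semistable sheaves with Hilbert polynomial $p$. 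The sheaf $F$ is the vector bundle furnished by Proposition \ref{proposition-purity-conditions} applied to this $p$.

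For the direction (i)$\implies$(ii), I would note first that a Gieseker semistable sheaf $E$ with Hilbert polynomial $p$ is pure, so $\Hom(F,E)=0$ by Proposition \ref{proposition-purity-conditions}; and that $E$ is $d_1$-regular, so the passive conditions $\hom^i(C_j,E)=N^i_j$ for $j<0$ are satisfied. The active data $C_0,C_1$ are produced by the ACK theorem: for integers $n\ll m$ chosen large enough in terms of $(X,\ko_X(1),p)$ only, one may take $C_0=\ko_X(-m)^{\oplus p(m)}$ and $C_1=\ko_X(-n)^{\oplus p(n)}$, and then there exists a morphism $C_0\to C_1$ whose cone $C$ satisfies $\Hom^*(C,E)=0$ precisely when $E$ is Gieseker semistable. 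The remaining $N^i_j$ for $j=0,1$ are read off from the (now fixed) Hilbert polynomial.

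The direction (ii)$\implies$(i) runs backwards through the same ingredients in the correct order: the passive conditions plus Proposition \ref{proposition-sheaf-conditions} give that $E\cong E^0$ is a $d_1$-regular sheaf of Hilbert polynomial $p$; the hypothesis $\Hom(F,E)=0$ together with Proposition \ref{proposition-purity-conditions} upgrades this to purity; and finally the existence of a distinguished triangle $C\to C_0\to C_1\to C[1]$ with $E\in C\orth$ is, by ACK, exactly what forces a pure sheaf with the prescribed Hilbert polynomial to be Gieseker semistable. The equivalence (ii)$\iff$(iii) is then an application of Lemma \ref{lemma-injective-sm}, verbatim as in Proposition \ref{curves-raynaud}(ii)$\iff$(iii): the hypothesis $\Hom^{-1}(C_0,E)=0$ required by that lemma is automatic because $C_0$ is a direct sum of twists of $\ko_X$ and $E$ has already been forced to be $d_1$-regular, and the explicit threshold $m\geq(\dim V-1)(\hom(C_1,E)-1)$ is uniform across the family.

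The main obstacle is the uniformity built into step two: I need the ACK bounds on $n,m$, and on the shape of the morphism $C_0\to C_1$, to depend only on $(X,\ko_X(1),p)$ and not on the individual sheaf $E$, so that the \emph{same} pair $(C_0,C_1)$ and the \emph{same} numerical data $N^i_j$ work for every Gieseker semistable $E$ with Hilbert polynomial $p$. This uniform version of the ACK criterion is exactly what is proved in \cite{ACK}; translating it into the two-term convolution required by our definition of P-stability (rather than a mere surjectivity or generation statement) is the delicate step, but Lemma \ref{lemma-injective-sm} together with Construction \ref{Sm} turns ``existence of a suitable morphism $C_0\to C_1$'' into a single cohomological vanishing against the bundles $S^m(C_0,C_1)$, which is what condition (iii) records.
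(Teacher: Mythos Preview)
Your proposal is correct and follows essentially the same route as the paper: passive data from Proposition \ref{proposition-sheaf-conditions}, purity via the bundle $F$ of Proposition \ref{proposition-purity-conditions}, the active pair $(C_0,C_1)$ supplied by \cite[Theorem 7.2]{ACK}, and the reduction (ii)$\iff$(iii) through Lemma \ref{lemma-injective-sm} exactly as in Proposition \ref{curves-raynaud}. The paper's own argument is terser---it neither spells out the specific ranks of $C_0,C_1$ nor the uniformity discussion you give---but the architecture is identical.
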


\begin{proof}
(i)$\iff$(ii)
By Proposition \ref{proposition-sheaf-conditions} we can choose sheaves
$C_{-m}$, $C_{-m+1}$, \dots, $C_{-1}$ and $N_j^i\in\IN$ with $j=-m,\dots,-1$
such that any object $E$ satisfying $\hom^i(C_j,E)=N_j^i$ is a sheaf
with Hilbert polynomial $p$.
By Proposition \ref{proposition-purity-conditions} there exists a sheaf
$F$ such that $\Hom(F,E)=0$ is equivalent to the purity of $E$.

Assuming these conditions on $E$, \cite[Theorem 7.2]{ACK} implies that there
are objects $C_0, C_1\in\Db(X)$ such that the existence of the above $C$ is
equivalent to the semistability of $E$.\\
(ii)$\iff$(iii)
Here we use that the sheaves $C_0$ and $C_1$ are
direct sums of $\ko_X(-N_i)$ for $N_i \gg 0$. So $\Hom^*(C_i,E)$ is
concentrated in degree zero. Now we can argue as in  the proof of
(ii)$\iff$(iii) in Proposition \ref{curves-raynaud}.
\end{proof}

\begin{localremark}
The above system of sheaves $(F,C_{-m},\dots,C_1)$ is a P-datum.
It is worth pointing out that the active part only consists of a single
morphism, by virtue of the theorem of \'Alvarez-C\'onsul and King.

On the other hand, our treatment of the purity conditions in Subsection
\ref{Purity-Conditions} can be used to improve the statement of \cite{ACK},
as their explicit hypothesis of 'pure' sheaf can be phrased in homological
terms.
\end{localremark}


\section{Preservation of stability} \label{preservation}

The classical approach to preservation of stability is this: let $X$ and $Y$
be smooth, projective varieties and consider a moduli space $\km_X(v)$ of
semistable sheaves on $X$ with given Mukai vector $v\in H^*(X)$. If furthermore
we are given a Fourier-Mukai transform $\Phi:\Db(X)\isom\Db(Y)$, then one might
ask if a sheaf $E\in\km_X(v)$ is mapped under $\Phi$ to a shifted sheaf (i.e.\
the complex $\Phi(E)\in\Db(Y)$ has cohomology only in a single degree $i$, in
which case $E$ is called $\mathrm{WIT}_i$; the sheaf is called $\mathrm{IT}_i$
if the single cohomology sheaf is even locally free). Assuming this, one might
next wonder if the resulting sheaf on $Y$ is itself semistable with respect to
suitable numerical constraints $v'\in H^*(Y)$ and some polarization on $Y$.

The hope is to produce maps $\Phi:\km_X(v)\to\km_Y(v')$ --- a hope that is
often founded: if the Fourier-Mukai transform is of geometric origin (given
by a universal bundle, for example), then there is a plethora of results
stating that stability is preserved in this sense.

Our point is that the restriction to WIT sheaves is unnatural in the context
of derived categories. It would be much more appealing if there was a notion
of stability which is preserved by equivalences on general grounds. This would
make the classical results about preservation of stability the
special case where sheaves happen to be mapped to (shifted) sheaves again.
Our notion of P-stability provides this. The Comparison Theorem shows that
semistable sheaves in $\km_X(v)$ can be encoded via a P-datum; it is then
tautological that the objects of $\Phi(\km_X(v))$ will be P-stable with
respect to the transformed P-datum. Hence we shift our point of view to the
following question: in which cases is the transformed P-datum of classical
origin, i.e.\ induced by Gieseker or $\mu$-semistability?

\subsection{Abelian surfaces}
Here is a typical example, see \cite[Theorem 3.34]{BBH}. Let $(A,H)$ be a
polarized Abelian surface, $\hat A$ the dual Abelian surface and
$\kp\in\Pic(A\times\hat A)$ the Poincar\'e bundle. This bundle gives rise
to the classical Fourier-Mukai transform $\FM_\kp:\Db(A)\isom\Db(\hat A)$
of \cite{Mukai}. Then $\hat{H}=-c_1(\FM_\kp(\ko_A(H)))$ is a polarization
for $\hat A$.

\begin{localtheorem} \label{spanish}
If $E$ is a $\mu$-stable locally free sheaf on $A$ with $\mu(E)=0$ and
rank $r>1$, then $E$ is $\mathrm{IT}_1$ and $\FM_\kp(E)[1]$ is a
$\mu$-semistable vector bundle with respect to $\hat H$.
\end{localtheorem}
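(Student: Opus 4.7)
My plan is to handle the $\mathrm{IT}_1$ assertion first by a stability-based vanishing argument, and then to derive $\mu$-semistability of the transform by transporting a hypothetical destabilizer back through Mukai's inversion formula.

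For the $\mathrm{IT}_1$ step, I fix $\alpha \in \hat A$, write $\kp_\alpha := \kp|_{A\times\{\alpha\}} \in \Pic^0(A)$, and note that $E \otimes \kp_\alpha$ remains $\mu$-stable of slope $0$ and rank $r > 1$. A nonzero element of $H^0(E\otimes\kp_\alpha) \cong \Hom(\kp_\alpha\dual, E)$ would give a saturated rank-$1$ subsheaf of $E$ of slope $\geq 0$, which is incompatible with $\mu$-stability of $E$ at slope $0$ with $r > 1$; hence $H^0(E\otimes\kp_\alpha) = 0$. Serre duality together with $\omega_A \cong \ko_A$ reduces $H^2$ to $H^0$ on $E\dual$, which is likewise $\mu$-stable of slope $0$ and rank $> 1$, so $H^2(E\otimes\kp_\alpha)=0$. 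Thus $E$ is $\mathrm{WIT}_1$; since $\chi(E\otimes\kp_\alpha) = \chi(E)$ is constant in $\alpha$, cohomology-and-base-change promotes $\hat E := \FM_\kp(E)[1]$ to a vector bundle on $\hat A$ of rank $-\chi(E)$, i.e.\ $E$ is $\mathrm{IT}_1$.

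For $\mu$-semistability of $\hat E$ I argue by contradiction: suppose a saturated subsheaf $\hat F \subsetneq \hat E$ destabilizes $\hat E$ with respect to $\hat H$, with cokernel sheaf $\hat G$. Applying the inverse transform $\hat\FM_\kp$ to $0 \to \hat F \to \hat E \to \hat G \to 0$ and using Mukai's inversion $\hat\FM_\kp\circ\FM_\kp \cong (-1_A)^*[-2]$, the middle term recovers $(-1_A)^*E[-1]$. Examining the long exact sequence of cohomology sheaves of the resulting triangle, together with the fact that $\hat\FM_\kp(\hat E)$ is concentrated in one degree, should force $\hat F$ and $\hat G$ into prescribed $\mathrm{WIT}$-classes and yield a sub- or quotient sheaf of $E$ on the nose. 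The slope translation is governed by the definition $\hat H = -c_1(\FM_\kp(\ko_A(H)))$: a Grothendieck--Riemann--Roch computation with $\kp$ shows that, for $\mathrm{WIT}$-sheaves, $\mu_{\hat H}(\FM_\kp(F))$ recovers $\mu_H(F)$ up to the sign and normalization dictated by the Mukai-vector transformation, so the assumed destabilization on $\hat A$ transports to a slope violation on $A$, contradicting $\mu$-stability of $E$ at $\mu = 0$.

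The main obstacle I foresee is the cohomological bookkeeping inside the inversion: although $\hat E$ is a genuine bundle, neither $\hat F$ nor $\hat G$ need be $\mathrm{WIT}$ of a single index \emph{a priori}, and one must argue that the extension structure inside $\hat E$ constrains them accordingly. A conceptually cleaner alternative, more in the spirit of this paper, is to proceed via Theorems \ref{theorem-preservation} and \ref{P-implies-mu}: $\mu$-semistability of $E$ is equivalent to P-stability for some datum $(C_\bullet, N)$ on $A$, so $\hat E$ is tautologically P-stable for the transformed datum $(\FM_\kp(C_\bullet)[1], N)$, and one would then need to match this with a P-datum for $\mu$-semistability on $(\hat A, \hat H)$. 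That route trades the inversion analysis for the task of identifying transformed test objects with appropriate sheaves on $\hat A$.
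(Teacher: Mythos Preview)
Your $\mathrm{IT}_1$ argument is correct and in fact more direct than the paper's: the paper establishes the vanishing of $\FM_\kp(E)^0$ and $\FM_\kp(E)^2$ by producing morphisms in $\Db(\hat A)$ (from twists of $\ko_{\hat A}$, resp.\ to skyscrapers) and transporting them back through $\FM_\kp\inv$ to contradict stability of $E$; it then separately checks torsion-freeness and local freeness. Your fibrewise vanishing via $\Hom(\kp_\alpha\dual,E)=0$ and Serre duality, followed by base change, is the classical shortcut and works fine here.

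The semistability half, however, is not a proof as it stands, and you say as much. There are two concrete issues. First, the claimed slope-transport (``$\mu_{\hat H}(\FM_\kp(F))$ recovers $\mu_H(F)$ up to sign and normalization'') is not true in this generality: the cohomological Fourier--Mukai transform on an abelian surface exchanges rank with $\chi$, so slopes do not map to slopes in any simple linear way, and a destabilizer on $\hat A$ need not produce a slope-destabilizer on $A$ after inversion without an additional argument. Second, the WIT-bookkeeping problem you flag is real: a saturated $\hat F\subset\hat E$ and its quotient $\hat G$ need not be WIT for $\hat\FM_\kp$, so the long exact sequence of cohomology sheaves of the inverted triangle does not immediately yield a sub- or quotient sheaf of $E$.

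The paper sidesteps both difficulties by using the orthogonal-sheaf characterisation you allude to at the end. Concretely: by Mehta--Ramanathan plus Faltings (Theorem~\ref{curves}), $E$ is $\mu$-semistable of slope $0$ iff $\Hom^*(E,F)=0$ for some torsion sheaf $F$ supported on a curve $C\in|mH|$ with $H^*(F)=0$. Since $\FM_\kp$ is an equivalence, $\Hom^*(\FM_\kp(E)[1],\FM_\kp(F)[1])=0$, and one checks that $\FM_\kp(F)[1]$ is again a torsion sheaf supported on a divisor in a multiple of $|\hat H|$. The same criterion on $\hat A$ then gives $\mu$-semistability of $\FM_\kp(E)[1]$ with respect to $\hat H$ directly --- no destabilizer chasing, no GRR slope computation, and no WIT analysis of subquotients. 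If you want to complete your write-up, this is the line to take; your destabilizer approach could in principle be made to work, but it is substantially harder than what is needed.
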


\begin{proof}
We are going to use the following characterisation of $\mu$-semistable
sheaves on an abelian surface (cf.~\cite[Theorem 3.1]{hein})
\begin{align*}
      E \text{ is $\mu$-semistable}
&\iff E\otimes\ko_C \text{ is semistable for } m\gg 0,
      \text{ and some } C\in|mH| \\
&\iff \Hom^*(E, F)=0 \text{ for some coherent sheaf } F
      \text{ on } C \text{ as above.}
\end{align*}
The first equivalence is deduced from the restriction theorem of Mehta
and Ramanathan (see \cite{MR} or also \cite{hl}, or for effective bounds
the results of Langer in \cite{langer}). The second equivalence follows
from Theorem \ref{curves}.
For $F$, we can use a torsion sheaf $F$ with a resolution by prescribed vector 
bundles, as in the proof of Theorem \ref{P-implies-mu}. Then, we have
$\Hom^*(E,F)=0$ and $\Hom^*(\ko_A,F)=0$. This defines a P-datum on $\Db(A)$. We
will show that the image under $\FM_P$ is a P-datum on $\Db(\hat A)$ containing
$\mu$-semistability for sheaves of degree 0.

For this, suppose that $\FM_\kp(E)[1]$ is a sheaf. Fix a sheaf $F$ such that
$E$ is $\mu$-semistable of degree 0 if and only if $\Hom^*(E,F)=0$. In
particular, $H^*(F)=0$ since $\ko_A$ is $\mu$-semistable of degree 0. Then,
$\FM_\kp(F)[1]$ is a sheaf concentrated on a divisor in $|m\rk_C(F)\hat{H}|$.
Thus, the conditions $\mu(E)=0$ and $E$ $\mu$-semistable force
$\FM_\kp(E)[1]$ to be $\mu$-semistable with respect to the dual
polarization $\hat H$.

It remains to show the vanishing of the cohomologies $\FM_\kp(E)^0$ (step 1)
and $\FM_\kp(E)^2$ (step 2) of the complex $\FM_\kp(E)$. After that we prove
that $\FM_\kp(E)^1$ is torsion free (step 3), and locally free (step 4).

Step 1:
If $\FM_\kp(E)^0\ne0$, then we have
 $\Hom(\ko_{\hat A}(-m \hat H), \FM_\kp(E)^0)\ne0$ for $m \gg 0$.
This implies
 $\Hom(\ko_{\hat A}(-m \hat H), \FM_\kp(E))\ne0$
(replace $\FM_\kp(E)$ by a complex concentrated in non-negative
degrees and use the Eilenberg-Moore spectral sequence).
Applying the inverse Fourier-Mukai transform $\FM_\kp\inv$, we get
$\Hom(\FM_\kp\inv(\ko_{\hat A}(-m \hat H)),E)\ne0$.
By \cite[Theorem 2.2]{Mukai}, the inverse is
$\FM_\kp\inv=(-1)^*\FM_{\kp}[2]$. As
$(-1)^*\FM_{\kp}(\ko_{\hat A}(-m \hat H))[2]$ is a semistable vector
bundle with positive first Chern class
(see \cite[Proposition 3.11]{Mukai}), $\FM_\kp(E)^0 \ne 0$ would
contradict the semistability of $E$.

Step 2: Now suppose $\FM_\kp(E)^2\ne0$. We choose a point $P \in
\supp(\FM_\kp(E)^2)$ and obtain a morphism $\FM_\kp(E)^2 \to k(P)$.
As before this gives a morphism $\FM_\kp(E) \to k(P)$, and a morphism
$E \to L_P\inv$ on $A$ where $L_P$ is the line bundle parameterized by
the point $P$. This morphism contradicts the $\mu$-stability of $E$.

Step 3: By what was already proven, we know that $\FM_\kp(E)[1]$ is
$\mu$-semistable. Thus, to show that this sheaf is torsion free, it is
enough to exclude the existence of a subsheaf $T\subset\FM_\kp(E)[1]$
with zero-dimensional support. If $T\ne0$ we have $H^0(T)\ne0$.
We deduce $\Hom(\ko_{\hat A},\FM_\kp(E)[1])\ne0$.
Applying the inverse Fourier-Mukai transform we obtain $\Ext^1(k(0),E)\ne0$.
However, this Ext group vanishes because $E$ was locally free at $0\in A$.
So we derive that $T=0$.

Step 4: Finally we show that the torsion free sheaf $\FM_\kp(E)[1]$ is a
vector bundle. If it was not locally free, there would be a proper
inclusion $\FM_\kp(E)[1] \rarpa{\iota} (\FM_\kp(E)[1])\ddual$. If $P \in
\supp(\coker(\iota))$, then we have $\Ext^1(k(P), \FM_\kp(E)[1])\ne0$, or,
after application of $\FM_\kp\inv$, that $\Hom(L_P\inv,E)\ne0$. But this
contradicts the $\mu$-stability of $E$.
\end{proof}

\begin{localremark}
In the proof of the above theorem the $\mu$-stability of $E$ can be
replaced by the following weaker condition: $E$ is $\mu$-semistable and for all
line bundles $L$ in $\Pic^0(A)$ we have $\Hom(L,E)=\Hom(E,L)=0$.
\end{localremark}

Fix integers $r$ and $s$ and let $\km_A(r,0,s)$ be the moduli space
of $\mu$-semistable sheaves $E$ on $A$ of rank $r$ and $c_1(E)=0$, $c_2(E)=s$.
By Theorem \ref{spanish}, $\FM_P(E)[1]$ is a $\mu$-semistable (and in fact
$\mu$-stable) sheaf for $\mu$-stable $E$. Hence, $\FM_P$ provides an injective
map $U\embed\km_{\hat A}(s,0,r)$ where $U\subset\km_A(r,0,s)$ is the open subset
of $\mu$-stable sheaves. Using the inverse transform $\FM_\kp\inv$ provides a
derived compactification which in the case at hand is nothing but the standard
compactification using $\mu$-semistable sheaves.

\subsection{Reversing universal bundles}

Let $X$ be a smooth, projective variety and $M=M_X(v)$ be a fine moduli space
of sheaves on $X$ with prescribed Mukai vector $v\in H^*(X)$. Denote by $P$
the universal sheaf on $X\times M$ and by $\Phi:=\FM_P:\Db(M)\to\Db(X)$ the
associated Fourier-Mukai transform. The right adjoint is given by
$\Phi^a:=\FM_Q:\Db(X)\to\Db(M)$ with kernel
 $Q=P\dual\otimes p_M^*\omega_M[\dim M]$.
The canonical transformation $\Phi^a\circ\Phi\to\id$ is an isomorphism. This
follows directly from writing $\Phi^a\circ\Phi$ as the Fourier-Mukai transform
whose kernel is the convolution of $P$ and $Q$; cohomology base change shows
that the convolution is a complex concentrated in degree $\dim(X)$, supported on
the diagonal and of rank one there (one can also check that this convolution
is just $\ko_\Delta[\dim(X)]$).

Hence, the adjoint functor $\Phi^a$ is fully faithful. By the Comparison
Theorem, stability on $X$ with parameters $v$ (which by assumption is the
same as semistability) can be phrased as P-stability for a P-datum
$(C_\bullet,N)$ on $X$. By Theorem \ref{theorem-preservation}, $\Phi(E)$ is
P-stable with respect to $(\Phi(C_\bullet),N)$. We also see that $X$
parameterizes such P-stable objects and these are sheaves on $M$ of the same
rank as $E$.

\subsection{Elliptic K3 surface}

Let $\pi:X\to\IP^1$ be an elliptic K3 surface with a section
$\sigma:\IP^1\to X$. Due to the presence of the section, the relative Jacobian
of $\pi$ is isomorphic to $X$ itself. In particular, there is a relative
Poincar\'e bundle $\kp$ on $X\times_{\IP^1}X$. We will use the associated
Fourier-Mukai transform $\Phi:=\FM_\kp:\Db(X)\isom\Db(X)$ which is an equivalence
by standard arguments \cite{huy} or \cite{BBH}.

We have two divisor classes at our disposal: the fibre $f=[\pi\inv(p)]$
(of any point $p\in\IP^1$) and the section $\sigma$. They intersect as $f^2=0$,
$f.\sigma=1$ and $\sigma^2=-2$; the latter because $\sigma\subset X$ is a smooth,
rational curve.

The divisor $H=\sigma+3f$ is big and effective, hence ample as $X$ is a K3 surface. We consider two moduli spaces of $\mu$-semistable sheaves (with respect to $H$) on
$X$. One is the Hilbert scheme $\km_1:=\Hilb^2(X)$ of 0-dimensional subschemes of
length 2 (or rather ideal sheaves of such); it is the moduli space of semistable sheaves of rank 1, $c_1=0$ and $c_2=2$. The other is the moduli space
$\km_2=\km_X(2,-\sigma,0)$ of $\mu$-semistable sheaves with prescribed Chern character. For a decomposable subscheme $Z\subset X$ of length 2 supported on distinct fibres, $\FM_\kp$ maps the twisted ideal sheaf $\ko_X(2\sigma)\otimes\ki_Z$ to a $\mu$-stable sheaf in $\km_2$; see \cite[\S6]{BBH}.

In this way, we obtain an isomorphism between the open set of points of
$\Hilb^2(X)$ with support in different fibres and the locus $\km_2^{s}$ of
stable sheaves. $\FM_P$ also identifies the boundaries. An easy computation
shows that for subschemes $Z$ supported on a single fibre,
$\FM_P(\ko_X(2\sigma)\otimes\ki_Z)$ is a complex with nonzero cohomology
in degrees 0 and 1. In other words, $\FM_P$ provides a compactification of
$\km_2^{s}$ using genuine complexes.

In this roundabout example, the compactification coming from $\km_1$ turns
out to be the same as the classical one by coherent sheaves with a singular
point. We hope that one can still see how P-stability may usefully enter
into the picture.

\subsection{Spherical transforms}

To an object $E\in\kt$ in a (reasonable) $k$-linear triangulated category,
one can associate a canonical functor $\TT_E$
\[ \Hom^\bullet(E,A)\otimes E\to A\to \TT_E(A)\to\Hom^\bullet(E,A)\otimes E[1] . \]
Here, $\Hom^\bullet(E,A)$ is the Hom complex; it is a complex of $k$-vector spaces
whose cohomology in degree $i$ is $\Hom^i(E,A)$. In particular, there is an
isomorphism $\Hom^\bullet(E,A)\cong\bigoplus_i\Hom^i(E,A)[-i]$. The first map
in the triangle is the evaluation map.

Note that due to the non-functoriality of cones, the above triangles are not
enough to define $\TT_E$ on morphisms. There are several ways to rectify this:
if $\kt$ comes from a dg-category, then the construction can be made on the
dg-level and descends to $\kt$. In the geometrical situation, $\kt=\Db(X)$,
one can specify $\TT_E$ as the Fourier-Mukai transform with kernel
 $\ki_E=\Cone(E\dual\boxtimes E\to\ko_\Delta)$,
i.e.\ we choose one cone on the kernel level (for this construction, $E$ has to
be a perfect object).

Assume that $X$ is Gorenstein (or more generally, such that the dualizing complex
$\omega_X$ exists and is perfect). A perfect object $E\in\Db(X)$ is said to be an
\emph{$d$-sphere object} (or $S^d$-object), for some integer $d$, if there is an
isomorphism of graded algebras $\Hom^*(E,E)\cong H^*(S^d,k)$, the only non-vanishing
pieces of latter being one-dimensional in degrees 0 and $d$. For such an object, the
functor $\TT_E:\Db(X)\to\Db(X)$ is fully faithful. By a standard criterion, this can
be checked by testing fully faithfulness on the spanning class $\{E\}\cup E\orth$,
which is easy in view of $\TT_E(E)\cong E[1-d]$ and $\TT_E|_{E\orth}=\id$. (The
assumption on $X$ allows to apply duality, ensuring ${}\orth(\{E\}\cup E\orth)=0$.)
Note that the familiar spherical twist equivalences of Seidel and Thomas 
\cite{Seidel-Thomas} are $\TT_E$ functors for $\dim(X)$-sphere objects satisfying
$E\otimes\omega_X\cong E$.

As an example, consider a ruled surface $X\to E$ over an elliptic curve. Then,
the structure sheaf $\ko_X$ satisfies the above condition with $d=1$ (this
follows from cohomology base change). Hence, we obtain a fully faithful
endofunctor $\TT_{\ko_X}:\Db(X)\to\Db(X)$. Again, this functor can be used to push
forward any P-datum on $X$. For example, if we start with a Hilbert scheme $\Hilb^n(X)$
of points on $X$ and choose a P-datum $(C_\bullet,N)$ describing stability of the ideal
sheaves $\ki_x$, then all $\TT_{\ko_X}(\ki_x)$ are P-stable with respect to
$(\TT_{\ko_X}(C_\bullet),N)$. As $\TT_{\ko_X}$ is not essentially surjective, the two
moduli spaces can differ. However, since $\TT_{\ko_X}$ is fully faithful, it is a
local isomorphism. Thus, the image of a smooth component on one side is a smooth
component on the other.


\end{document}